\title[Domination and Projective Dimension]{Projective Dimension, Graph Domination Parameters, and Independence Complex Homology}
\author{Hailong Dao}
\author{Jay Schweig}
\address{Department of Mathematics, University of Kansas, 405 Snow Hall, Lawrence, KS 66045} 
\email{hdao@math.ku.edu}
\address{Department of Mathematics, University of Kansas, 405 Snow Hall, Lawrence, KS 66045} 
\email{jschweig@math.ku.edu}
\dedicatory{Dedicated to Craig Huneke on the occasion of his sixtieth birthday}
\keywords{Projective dimension, independence complex, graph domination, Hochster's formula, edge ideal}
\subjclass [2000]{13D02, 13P25, 05C10, 05C69, 05E45}
\newtheorem{theorem}{Theorem}[section]
\newtheorem{proposition}[theorem]{Proposition}
\newtheorem{corollary}[theorem]{Corollary}
\newtheorem{lemma}[theorem]{Lemma}
\newtheorem{definition}[theorem]{Definition}
\newtheorem{question}[theorem]{Question}
\newtheorem{example}[theorem]{Example}
\newtheorem{remark}[theorem]{Remark}
\newtheorem{observation}[theorem]{Observation}
\newcommand{\reg}{\text{reg}}   
\newcommand{\ep}{\epsilon}
\DeclareMathOperator{\st}{st}
\DeclareMathOperator{\ind}{ind}
\DeclareMathOperator{\bh}{BigHeight}
\DeclareMathOperator{\pd}{pd}
\DeclareMathOperator{\is}{Is}
\DeclareMathOperator{\height}{height}
\begin{document} 

\maketitle

\begin{abstract}
We construct several pairwise-incomparable bounds on the projective dimensions of edge ideals.  Our bounds use combinatorial properties of the associated graphs; in particular we draw heavily from the topic of dominating sets.  Through Hochster's Formula, we recover and strengthen existing results on the homological connectivity of graph independence complexes.
\end{abstract}

\section{Introduction}

Let $G$ be a graph with independence complex $\ind(G)$ and fix a ground field $\bf{k}$.  A much-studied question in combinatorial and algebraic graph theory is the following:

\begin{question}\label{mainq}
What are non-trivial bounds on the biggest integer $n$ such that $\tilde H_i(\ind(G), {\bf k}) =0$ for $0\leq i \leq n$? 
\end{question}

Answers to the above question immediately give constraints on the homotopy type of the independence complex. They can also be applied to various  other problems, such as Hall type theorems (see \cite{ah}).  Thus, the question has drawn attention from many researchers (see, for instance, \cite{abz}, \cite{barmak}, \cite{DE}, or \cite{mesh}, and references given therein). Usually, the tools in such work come from combinatorial topology.  

In our work, we take a different approach; through the well-known Hochster's Formula (see Theorem \ref{Hochster}), we relate Question \ref{mainq} to the projective dimension of the associated edge ideal.  In particular, we begin with the following observation.

\begin{observation}
Any upper bound for the projective dimension of a graph's edge ideal provides a lower bound for the first non-zero homology group of the graph's independence complex.  
\end{observation}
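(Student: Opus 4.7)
The plan is to derive the observation as a direct consequence of Hochster's formula (Theorem \ref{Hochster}), which is cited but not yet displayed in the excerpt. Recall that the Stanley--Reisner complex attached to the edge ideal $I(G)$ of a graph $G$ on vertex set $V$ is precisely $\ind(G)$, since the squarefree monomials outside $I(G)$ correspond exactly to independent subsets of $V$. Hochster's formula therefore expresses each graded Betti number of $S/I(G)$ as
\[
\beta_{i,j}(S/I(G)) \;=\; \sum_{W \subseteq V,\ |W|=j} \dim_{\bf k} \tilde H_{j-i-1}\bigl(\ind(G)[W];\,{\bf k}\bigr),
\]
where $\ind(G)[W]$ denotes the subcomplex of $\ind(G)$ induced on $W$.

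The key step is to specialize to $W = V$, in which case $\ind(G)[W] = \ind(G)$ itself, yielding
\[
\beta_{i,|V|}(S/I(G)) \;=\; \dim_{\bf k} \tilde H_{|V|-i-1}\bigl(\ind(G);\,{\bf k}\bigr).
\]
From this single identity, I would argue contrapositively: suppose $\tilde H_k(\ind(G);{\bf k}) \neq 0$ for some $k$. Then choosing $i = |V|-k-1$ makes the right-hand side nonzero, and hence $\beta_{i,|V|}(S/I(G)) \neq 0$, forcing $\pd(S/I(G)) \geq |V|-k-1$.

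Rearranging this inequality gives the precise content of the observation: if $\pd(S/I(G)) \leq p$ is any upper bound on the projective dimension, then $\tilde H_k(\ind(G);{\bf k}) = 0$ for every $k < |V| - p - 1$, so the first nonzero reduced homology group of $\ind(G)$ occurs in degree at least $|V| - p - 1$. There is no genuine obstacle here; the only point requiring care is the index bookkeeping in Hochster's formula, in particular the $-1$ shift between the homological degree $i$ of the Betti number and the topological degree $|V| - i - 1$ of the homology group. I would simply make that arithmetic explicit and note that a sharper upper bound on $\pd$ automatically yields a sharper (larger) lower bound on the index of the first nonvanishing homology, which justifies the strategy of the paper: bounding $\pd(S/I(G))$ combinatorially in order to answer Question \ref{mainq}.
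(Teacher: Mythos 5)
Your proposal is correct and follows essentially the same route as the paper, which makes the observation precise by specializing Hochster's formula to the full vertex set $W = V(G)$ (Corollary \ref{hochster}) to obtain $\tilde H_k(\ind(G)) = 0$ for $k < n - \pd(G) - 1$ (Corollary \ref{pdhomology}). Your index bookkeeping, including the shift between Betti numbers of $I(G)$ and of $S/I(G)$, is consistent with the paper's conventions.
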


Thus, our work revolves around combinatorially constructed bounds for the projective dimension of edge ideals.  The bounds we obtain on independence complex homology typically recover or improve on what is known in the literature for general graphs as well as several well-studied subclasses: for example chordal, generalized claw-free, and finite subgraphs of integer lattices in any dimension (see Section \ref{homsec}). Our proofs are sometimes subtle but quite elementary, and follow an axiomatized inductive approach.  Thus, our main methods make the problem of bounding the projective dimensions or homological connectivity of the independence complex of all graphs in a given class a rather mechanical task (see Section \ref{metas}, especially Theorems \ref{ME} and \ref{secondmeta}).  Our only required tools are a special case of Hochster's Formula (Corollary \ref{hochster}) and a straightforward short exact sequence (Lemma \ref{pdinduct}).



Most of our bounds make use of various \emph{graph domination parameters}.  Dominating sets in graphs have received much attention from those working on questions in combinatorial algorithms, optimization, and computer networks (see \cite{dombook}).  We believe that our results are the first to systematically relate domination parameters of a graph to its edge ideal's projective dimension (although the connection between domination parameters and independence homology has been previously explored; see \cite{mesh}).  
        
In particular, our study leads to the definition of \emph{edgewise-domination}, a new graph domination parameter which works especially well with bounding projective dimension (see the beginning of Section 4 for definition and Theorem \ref{edgedom}). 


Our paper is organized as follows.  We start by reviewing the necessary background in both commutative algebra and graph theory.  Section \ref{metas} is concerned with the technical background and several key theorems we use in later sections.  It also contains our first upper bounds for the projective dimension of an edge ideal.  These bounds use invariants such as the chromatic number of a graph's complement and the maximum degree of an edge.  In Section \ref{dominationsection}, we relate the projective dimension of a graph's edge ideal to various domination parameters of the graph, and introduce a new domination parameter. In Section \ref{chordalsection} we apply our methods to situations when an exact formula for the projective dimension can be found.  In particular, we recover a known formula for the projective dimension of a chordal graph.  We also introduce a new class of graphs we call long graphs which behave nicely with respect to dominating sets (see \ref{long}).  Section \ref{homsec} is concerned with bounds on the homology of graph independence complexes.  Using Hochster's Formula as a bridge, we manage to recover and/or strengthen many results on the connectivity of such complexes.  We conclude with some examples and a discussion of further research directions in Section \ref{lastsec}. 

\section{Preliminaries and Background}\label{backgroundsection}

\subsection{Graph Theory}

Most of our graph theory terminology is fairly standard (see \cite{bollobas}).  All our graphs are finite and \emph{simple} (meaning they have no loops or parallel edges). 

For a graph $G$, let $V(G)$ denote its vertex set.  We write $(v, w)$ to denote an edge of $G$ with endpoints $v$ and $w$ (all our graphs are \emph{un}directed, so the order of $v$ and $w$ is immaterial).  If $v$ is a vertex of $G$, we let $N(v)$ denote the set of its neighbors.  If $X$ is a subset of vertices of $G$, we also set $N(X) = \bigcup_{v \in X} N(x)$. 

If $G$ is a graph and $W \subseteq V(G)$, the \emph{induced subgraph} $G[W]$ is the subgraph of $G$ with vertex set $W$, where $(v, w)$ is an edge of $G[W]$ if and only if it is an edge of $G$ and $v, w \in W$.  If $v\in V(G)$, the \emph{star} of $v$, $\st(v)$, is the induced subgraph $G[N(v) \cup \{v\}]$.  We also write $G^c$ for the \emph{complement} of $G$, the graph on the same vertex set as $G$ where $(v,w)$ is an edge of $G^c$ whenever it is not an edge of $G$.  

We also write $\is(G)$ to denote the set of isolated vertices of $G$, and we let $\overline{G} = G - \is(G)$.  

We also write $K_{m, n}$ to denote the complete bipartite graph with $m$ vertices on one side and $n$ on the other.  Recall that $K_{1, 3}$ is known as a \emph{claw}, and graphs with no induced subgraph isomorphic to $K_{1,  3}$ are called \emph{claw-free}.  

Most of our proofs use (sometimes nested) induction; Thus we are interested in classes of graphs closed under deletion of vertices:
\begin{definition}
Let $\mathcal{C}$ be a class of graphs such that $G - x$ is in $\mathcal{C}$ whenever $G \in \mathcal{C}$ and $x$ is a vertex of $G$.  We call such a class \emph{hereditary}.  
\end{definition}

Note that, by definition, hereditary classes of graphs are closed under the removal of induced subgraphs (such as stars of vertices).

Most widely-studied classes of graphs arising in graph theory are hereditary (such as claw-free graphs, perfect graphs, planar graphs, graphs not having a fixed graph $G$ as a minor, et cetera).  

\subsection{Algebraic Background}

Fix a field $\bf{k}$, and let $S = {\bf k}  [x_1, x_2, \ldots, x_n]$ (as ${\bf k}$ is fixed throughout, we suppress it from the notation).  If $G$ is a graph with vertex set $V(G) = \{x_1, x_2, \ldots, x_n\}$, the \emph{edge ideal} of $G$ is the monomial ideal $I(G)\subseteq S$ given by 
\[
I(G) = (x_ix_j: (x_i, x_j) \text{ is an edge of } G).
\]

Edge ideals have been heavily studied (see \cite{DE}, \cite{fv}, \cite{hv}, \cite{mv}, and references given therein).  We say a subset $W \subseteq V(G)$ is \emph{independent} if no two vertices in $W$ are adjacent (equivalently, $G[W]$ has no edges).  Closely related to the edge ideal $I(G)$ of $G$ is its \emph{independence complex}, $\ind(G)$, which is the simplicial complex on vertex set $V(G)$ whose faces are the independent sets of $G$.  Note that $I(G)$ is the \emph{Stanley-Reisner ring} of $\ind(G)$.  

For a graph $G$, we write $\pd(G)$ and $\reg(G)$ as shorthand for $\pd(S/I(G))$ and $\reg(S/I(G))$, respectively.  

Central to the link between commutative algebra and combinatorics is \emph{Hochster's Formula}, which relates the Betti numbers of an ideal to its Stanley-Reisner complex (see, for instance, \cite{millersturmfels}).  In our case, we have the following.

\begin{theorem}[Hochster's Formula]\label{Hochster}
Let $\Delta$ be the Stanley-Reisner complex of a squarefree monomial ideal $I \subseteq S$.  For any multigraded Betti number $\beta_{i, m}$ where $m$ is a squarefree monomial of degree $\geq i$, we have 
\[
\beta_{i-1, m}(I) = \dim_{{\bf k}} (\tilde{H}_{\deg m - i - 1}(\Delta[m]), {\bf k}),
\]
where $\Delta[m]$ is the subcomplex of $\Delta$ consisting of those faces whose vertices correspond to variables occurring in $m$. 
\end{theorem}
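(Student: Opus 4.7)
The plan is to compute the multigraded Betti numbers of $I$ via $\mathrm{Tor}$, resolving the residue field ${\bf k}$ with the Koszul complex and identifying the $\mathbb{Z}^n$-graded strands with (co)chain complexes of induced subcomplexes of $\Delta$.

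First, I would reduce to a computation for $S/I$. The short exact sequence $0 \to I \to S \to S/I \to 0$ together with the freeness of $S$ yields $\mathrm{Tor}_{i-1}^S(I, {\bf k}) \cong \mathrm{Tor}_i^S(S/I, {\bf k})$ as multigraded modules for $i \geq 1$, so it suffices to establish $\beta_{i, m}(S/I_\Delta) = \dim_{\bf k} \tilde H_{\deg m - i - 1}(\Delta[m]; {\bf k})$.

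Next, let $K_\bullet$ be the Koszul complex on $x_1, \ldots, x_n$; this is a $\mathbb{Z}^n$-graded minimal free resolution of ${\bf k}$, so $\mathrm{Tor}_i^S(S/I_\Delta, {\bf k}) \cong H_i(S/I_\Delta \otimes_S K_\bullet)$ as multigraded vector spaces. Writing $\sigma = \mathrm{supp}(m)$, the key step is to extract the multidegree-$m$ strand. At homological degree $j$, a ${\bf k}$-basis is given by classes of the form $e_T \otimes \prod_{i \in \sigma \setminus T} x_i$, ranging over subsets $T \subseteq \sigma$ with $|T| = j$ for which $F := \sigma \setminus T$ is a face of $\Delta$; elements with $F \notin \Delta$ vanish modulo $I_\Delta$. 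Under the correspondence $T \leftrightarrow F$, the Koszul differential translates, up to sign, into the simplicial coboundary on $\Delta[m]$.

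The homology at position $j$ is therefore $\tilde H^{|\sigma| - j - 1}(\Delta[m]; {\bf k})$, which coincides with $\tilde H_{|\sigma| - j - 1}(\Delta[m]; {\bf k}) = \tilde H_{\deg m - j - 1}(\Delta[m]; {\bf k})$ because ${\bf k}$ is a field. Combining this with the index shift from the first step yields the claimed formula. The hard part will be the middle identification: verifying that the vanishing modulo $I_\Delta$ corresponds precisely to the condition $F \in \Delta[m]$, and that the Koszul sign conventions line up with those of the simplicial coboundary. Once that dictionary is in hand, the homological interpretation is essentially automatic.
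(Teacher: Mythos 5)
The paper does not prove this statement; it is quoted as a known result with a pointer to Miller--Sturmfels, and your outline is precisely the standard Koszul-complex proof found there. Your reduction $\mathrm{Tor}_{i-1}^S(I,{\bf k})\cong\mathrm{Tor}_i^S(S/I,{\bf k})$ (valid for all $i\geq 1$ since $I\subseteq\mathfrak{m}$ kills the connecting map at $i=1$), the identification of the multidegree-$m$ strand of $S/I_\Delta\otimes_S K_\bullet$ with the reduced simplicial cochain complex of $\Delta[m]$ indexed so that homological degree $j$ corresponds to cochain degree $\deg m - j - 1$, and the final passage from cohomology to homology via field coefficients are all correct, so the proposal is sound.
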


In particular, we are interested in the following specialization of Hochster's Formula to simple graphs and their independence complexes.  Here and throughout, if $\Delta$ is a complex, we write $\tilde{H}_k(\Delta) = 0$ to mean that the associated homology group has rank zero.  

\begin{corollary}\label{hochster}
Let $G$ be a graph with vertex set $V$.  Then $\pd(G)$ is the least integer $i$ such that 
\[
\tilde{H}_{|W| - i - j -1}(\ind(G[W])) = 0
\]
for all $j > 0$ and $W \subseteq V$.  Moreover, $\reg(G)$ is the greatest value of $k$ so that
\[
\tilde{H}_{k - 1}(\ind(G[W])) \neq 0
\]
for some subset $W\subseteq V$.  
\end{corollary}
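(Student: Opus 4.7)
The proof plan is to translate Hochster's formula (Theorem \ref{Hochster}) from a statement about multigraded Betti numbers of $I(G)$ into a statement about total Betti numbers of $S/I(G)$, and then invoke the definitions of projective dimension and regularity.

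First, I would observe that $\ind(G)$ is the Stanley--Reisner complex of $I(G)$, and that for any squarefree monomial $m = \prod_{x \in W} x$ with $W \subseteq V$, the subcomplex $\ind(G)[m]$ is exactly $\ind(G[W])$: a subset of $W$ is a face of $\ind(G)[m]$ if and only if it is independent in $G$, which (since the subset lies in $W$) is equivalent to being independent in $G[W]$. Because $I(G)$ is a squarefree monomial ideal, its multigraded Betti numbers are supported only in squarefree multidegrees, so Hochster's formula with $m = \prod_{x \in W} x$ gives
\[
\beta_{i-1, m}(I(G)) = \dim_{\bf k} \tilde{H}_{|W| - i - 1}\bigl(\ind(G[W])\bigr)
\]
for every $W \subseteq V$ and every $i \geq 1$, and these are the only possibly nonzero multigraded Betti numbers of $I(G)$.

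Next I would apply the standard shift coming from the short exact sequence $0 \to I(G) \to S \to S/I(G) \to 0$, which yields $\beta_{i,m}(S/I(G)) = \beta_{i-1,m}(I(G))$ for $i \geq 1$. Combined with the previous identification, this gives $\beta_{i,m}(S/I(G)) = \dim_{\bf k} \tilde{H}_{|W| - i - 1}(\ind(G[W]))$ for each squarefree $m$ with support $W$. Since $\pd(G) = \pd(S/I(G))$ is the largest $i$ for which $\beta_{i,m}(S/I(G)) \neq 0$ for some (squarefree) $m$, an integer $i$ exceeds $\pd(G)$ precisely when $\tilde{H}_{|W| - i' - 1}(\ind(G[W])) = 0$ for every $i' > i$ and every $W$. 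Writing $i' = i + j$ for $j > 0$ recovers the stated characterization of $\pd(G)$ as the least such $i$.

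For the regularity statement, I would use the definition $\reg(S/I(G)) = \max\{\deg(m) - i : \beta_{i,m}(S/I(G)) \neq 0\}$, where the maximum ranges over squarefree $m$ with support $W$ so that $\deg(m) = |W|$. Setting $k = |W| - i$, the equation $\beta_{i,m}(S/I(G)) = \dim_{\bf k} \tilde{H}_{k-1}(\ind(G[W]))$ shows that $\reg(G)$ is the largest $k$ for which some induced subgraph $G[W]$ has $\tilde{H}_{k-1}(\ind(G[W])) \neq 0$. Neither part of the corollary poses a real obstacle; the only subtlety is bookkeeping the homological indices correctly and confirming that restricting to squarefree multidegrees loses no information, both of which follow from the fact that $I(G)$ is generated by squarefree monomials.
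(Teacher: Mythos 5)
Your proof is correct and follows the only natural route: the paper itself offers no proof of this corollary, treating it as an immediate specialization of Hochster's formula, and your argument supplies exactly the standard bookkeeping (the identification $\ind(G)[m] = \ind(G[W])$, the shift $\beta_{i,m}(S/I(G)) = \beta_{i-1,m}(I(G))$, and the concentration of Betti numbers in squarefree multidegrees) that the authors leave implicit. The index-chasing for both the projective dimension and regularity statements checks out.
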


\section{General Formulas for Bounding Projective Dimension}\label{metas}
This section contains the  key technical process which we shall utilize for the rest of the paper. 
Our first  result  provides a general framework for bounding the projective dimension of a graph's edge ideal. 

\begin{theorem}\label{ME}
Let $\mathcal{C}$ be a hereditary class of graphs and let $f: \mathcal{C} \rightarrow \mathbb{R}$ be a  function satisfying the following conditions:  
\begin{enumerate}
\item $f(G) \leq |V(G)|$ when $G$ is a collection of isolated vertices. 

Furthermore, for any $G \in \mathcal{C}$ with at least an edge there exists a nonempty set of vertices $v_1, v_2, \ldots, v_k$ such that if we set $G_i = G - v_1 - v_2 - \ldots -v_i$ for $ 0 \leq i \leq k$ ($G_0 =G$), then:
\item $f(G_i- \st_{G_i}  v_{i+1}) +1 \geq f(G)$ for $ 0 \leq i \leq k-1$.
\item $f(\overline{G_k}) + |\is(G_k)|  \geq f(G)$.
\end{enumerate}  

Then for any graph $G \in \mathcal C$
\[
\pd(G) \leq |V(G)| - f(G).
\]
\end{theorem}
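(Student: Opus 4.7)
The plan is induction on $|V(G)|$. The base case is when $G$ has no edges, so $G$ consists entirely of isolated vertices: then $I(G) = 0$ and $\pd(G) = 0$, while condition (1) gives $|V(G)| - f(G) \geq 0$, as required.

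For the inductive step, I assume $G$ has at least one edge and invoke the vertex sequence $v_1, \ldots, v_k$ (with $k \geq 1$) provided by conditions (2) and (3). Applying Lemma \ref{pdinduct} at $v_{i+1}$ inside $G_i$ yields the one-step recursion
\[
\pd(G_i) \leq \max\bigl\{\deg_{G_i}(v_{i+1}) + \pd(G_i - \st_{G_i}(v_{i+1})),\ 1 + \pd(G_{i+1})\bigr\}.
\]
Unfolding the second branch $k$ times starting from $G_0 = G$ then gives
\[
\pd(G) \leq \max\Bigl\{\max_{0 \le i \le k-1}\bigl(i + \deg_{G_i}(v_{i+1}) + \pd(G_i - \st_{G_i}(v_{i+1}))\bigr),\ k + \pd(G_k)\Bigr\}.
\]
Every graph on the right is a proper induced subgraph of $G$, hence lies in the hereditary class $\mathcal{C}$, so the inductive hypothesis is available for each.

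For a star-deletion term, I would substitute $|V(G_i - \st_{G_i}(v_{i+1}))| = |V(G)| - i - 1 - \deg_{G_i}(v_{i+1})$ into the inductive bound. The degree term and the index $i$ cancel cleanly, leaving the summand bounded by $|V(G)| - 1 - f(G_i - \st_{G_i}(v_{i+1}))$, which is at most $|V(G)| - f(G)$ exactly by condition (2). For the terminal term $k + \pd(G_k)$, I would use the standard fact that projective dimension is unchanged when free variables (i.e., isolated vertices) are adjoined, so $\pd(G_k) = \pd(\overline{G_k})$; then the inductive hypothesis applied to $\overline{G_k}$, together with $|V(\overline{G_k})| = |V(G)| - k - |\is(G_k)|$, gives $k + \pd(G_k) \leq |V(G)| - |\is(G_k)| - f(\overline{G_k})$, and this is at most $|V(G)| - f(G)$ by condition (3).

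The main obstacle I anticipate is not the logical skeleton but the bookkeeping inside the iterated short exact sequence: one has to verify that the accumulated index $i$, the star degree $\deg_{G_i}(v_{i+1})$, and the vertex count $|V(G_i - \st_{G_i}(v_{i+1}))|$ telescope against one another to leave precisely the single unit of slack that conditions (2) and (3) have been calibrated to absorb. Once the arithmetic is laid out carefully, the proof is a direct induction driven by Lemma \ref{pdinduct}.
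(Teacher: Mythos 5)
Your proof is correct and is essentially the paper's own argument: the paper frames it as a minimal-counterexample contradiction while you frame it as a direct induction on $|V(G)|$, but both iterate Lemma \ref{pdinduct} along $v_1,\ldots,v_k$ and use exactly the same telescoping of $i$, $\deg_{G_i}(v_{i+1})$, and the vertex count against the one unit of slack in conditions (2) and (3). The bookkeeping you flag as the main risk works out precisely as you describe, so nothing further is needed.
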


Before proving Theorem \ref{ME}, we need the following helpful lemma, also used in \cite{craigandus}, which will prove invaluable in our work.  

\begin{lemma}[see, for instance, \cite{craigandus}]\label{pdinduct}

Let $x$ be a vertex of a graph $G$.  Then
\[
\pd(G) \leq \max \{ \pd(G - \st x) + \deg x, \pd(G - x) +1\}.
\]
\end{lemma}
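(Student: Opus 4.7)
The plan is to exploit the short exact sequence of $S$-modules
\[
0 \to \bigl(S/(I(G) : x)\bigr)(-1) \xrightarrow{\;\cdot x\;} S/I(G) \to S/(I(G), x) \to 0
\]
and to compute the projective dimensions of the two outer terms combinatorially. Since every edge of $G$ incident to $x$ is absorbed by the variable $x$, and since (for a squarefree monomial $m$) $xm \in I(G)$ happens exactly when $m$ is divisible by a neighbor of $x$ or by an edge of $G$ not incident to $x$, a direct check gives
\[
(I(G), x) = (x) + I(G - x) \quad\text{and}\quad (I(G):x) = (N(x)) + I(G - \st x).
\]

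Next I would use the fact that any variable absent from a monomial ideal is a nonzerodivisor on the corresponding quotient, and that adjoining one such regular element raises the projective dimension by exactly one. Since $x$ does not appear in $I(G-x)$, this yields $\pd_S\bigl(S/((x) + I(G-x))\bigr) = \pd(G-x) + 1$. Similarly, none of the $\deg x$ variables in $N(x)$ appears in $I(G - \st x)$, so they form a regular sequence on $S/I(G - \st x)$, and iterating gives
\[
\pd_S\bigl(S/((N(x)) + I(G - \st x))\bigr) = \pd(G - \st x) + \deg x.
\]
Along the way one uses the routine observation that adjoining polynomial variables absent from an ideal leaves its projective dimension unchanged, so that the projective dimensions of these outer terms really do match $\pd(G-x)$ and $\pd(G - \st x)$ as defined in the paper.

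Finally, I would combine everything using the standard inequality $\pd(B) \leq \max\{\pd(A), \pd(C)\}$ arising from the long exact $\mathrm{Tor}$ sequence of a short exact sequence $0 \to A \to B \to C \to 0$; the internal degree shift $(-1)$ on the left-hand term is irrelevant to projective dimension. This produces the stated bound on $\pd(G)$. The main obstacle is not conceptual but bookkeeping: one must keep careful track of which variables are "extra" in each outer term so that the correct integers ($+1$ and $+\deg x$) appear in the final maximum.
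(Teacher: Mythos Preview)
Your proposal is correct and follows essentially the same approach as the paper: both use the short exact sequence $0 \to S/(I(G):x) \to S/I(G) \to S/(I(G),x) \to 0$, identify the outer terms as $S/((N(x)) + I(G-\st x))$ and $S/((x) + I(G-x))$, and apply the standard projective-dimension inequality from the long exact Tor sequence. Your write-up is in fact more explicit than the paper's about why the extra variables contribute exactly $+\deg x$ and $+1$ to the projective dimensions (via the regular-sequence argument), but the underlying argument is the same.
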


\begin{proof}
Consider the following exact sequence:
\[
0 \rightarrow S/(I(G) \colon x) \rightarrow S/I(G) \rightarrow S/(I(G), x) \rightarrow 0.
\]
This gives us that $\pd(I(G)) \leq \max \{ \pd(I(G) \colon x), \pd(I(G), x)\}$.  It is easily seen that $(I(G) \colon x)$ is the ideal generated by $I(G - \st x)$ along with the variables in $N(x)$, whereas $(I(G), x)$ is the ideal generated by $I(G -x)$ and $x$.  Thus, $\pd(I(G) \colon x ) = \pd(I(G - \st x)) +  \deg x$ and $\pd(I(G), x) = \pd(I(G - x)) + 1$.  
\end{proof}

\begin{proof}[Proof of Theorem \ref{ME}]
We will argue by contradiction. Suppose there is a counterexample $G$ with a minimal number of vertices. Let $v_1, v_2, \ldots, v_k$ be a set of vertices satisfying conditions (2) and (3). We will prove by induction on $i$ that $$\pd(G_i) \leq \pd(G_{i+1}) +1 \  \  (*) $$
for $0\leq i\leq n-1$. 

We start with $i=0$. By Lemma \ref{pdinduct} we need to show that $\pd(G) > \pd(G - \st v_1) + \deg v_1$. If this fails, then
\begin{align*}
\pd(G) &\leq  \pd(G_1 - \st v_1) +\deg v_1 \\ 
& \leq |V(G_1 - \st v_1)| - f(G_1 - \st v_1) +\deg v_1 \\
& =  |V(G)| -\deg v_1-1 - f(G_1 - \st v_1) + \deg v_1\\
& \leq |V(G)| -f(G).
\end{align*}
Now, suppose we have proved (*) for $0 \leq i \leq j-1$. We argue just like above for the induction step. By Lemma \ref{pdinduct} we need to show that $\pd(G) > \pd(G_j - \st_{G_j} v_{j+1}) + \deg_{G_j} v_{j+1} + j$. If this is not true then,
\begin{align*}
\pd(G)  & \leq  \pd(G_{j} - \st_{G_j}v_{j+1}) + \deg_{G_j} v_{j+1} +j \\
   &\leq |V(G_{j} - \st_{G_j} v_{j+1})| - f(G_j - \st_{G_j} v_{j+1}) +\deg_{G_j} v_{j+1} +j  \\
   &=  |V(G)| - j - \deg v_1-1 - f(G_j - \st_{G_j}v_{j+1}) + \deg v_{j+1} +j \\
   & \leq |V(G)| -f(G).
   \end{align*}
which contradicts our choice of $G$. Now that (*) is established, it follows that $\pd(G) \leq \pd(G_k) +k$. But by condition (3): 
\begin{align*}
\pd(G_k) + k   & = \pd (\overline{G_k}) + k\\
& \leq |V(\overline{G_k})| - f(\overline{G_k}) + k \\
& =  |V(G)| - k -|\is(G_k)| - f(\overline{G_k}) + k\\
& \leq |V(G)| -f(G)
\end{align*}
which again contradicts our choice of  $G$, proving the theorem.
\end{proof}

The following consequence of Theorem \ref{ME} will also be rather helpful.

\begin{theorem}\label{secondmeta}
Let $\mathcal{C}$ be a hereditary class of graphs and let $h: \mathcal{C} \rightarrow \mathbb{R}$ be a function satisfying the following conditions.
\begin{enumerate}
\item $h$ is non-decreasing.  That is, $h(G - v) \leq h(G)$ for any vertex $v$ of $G$. 

\item For any $G \in \mathcal{C}$ there exists a vertex $v$ so that the neighbors of $v$ admit an ordering $v_1, v_2, \ldots, v_k$ satisfying the following property: if $G_i = G - v_1 - v_2 - \ldots - v_i$, then $i + d_{i+1} + 1 \leq h(G)$ for all $i < k $ (where $d_{i+1}$ denotes the degree of $v_{i+1}$ in $G_i$).  Furthermore, $h(G) \geq k+1$.  
 
\end{enumerate}
Then 
\[
\pd(G) \leq n\left(1 - \frac{1}{h(G)}\right).
\]
\end{theorem}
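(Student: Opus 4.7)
The plan is to deduce this from Theorem \ref{ME} by choosing $f(G) = |V(G)|/h(G)$, so that the conclusion $\pd(G) \leq |V(G)| - f(G)$ of Theorem \ref{ME} is exactly the desired bound $\pd(G) \leq n(1 - 1/h(G))$. What remains is to verify the three conditions of Theorem \ref{ME} for this particular $f$.

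Condition (1) asks that $f(G) \leq |V(G)|$ whenever $G$ is a collection of isolated vertices, which is equivalent to $h(G) \geq 1$. This is the trivial case of hypothesis (2): for such $G$, any vertex $v$ has empty neighbor list, so $k = 0$, and the stated inequality $h(G) \geq k + 1$ gives $h(G) \geq 1$. For condition (2) of Theorem \ref{ME}, I take as the distinguished sequence the ordered neighbors $v_1, \ldots, v_k$ of $v$ supplied by hypothesis. The graph $G_i - \st_{G_i}(v_{i+1})$ has $|V(G)| - i - d_{i+1} - 1$ vertices, and monotonicity of $h$ gives $h(G_i - \st_{G_i}(v_{i+1})) \leq h(G)$. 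Plugging into the definition of $f$ and clearing denominators, the inequality $f(G_i - \st_{G_i}(v_{i+1})) + 1 \geq f(G)$ reduces directly to $h(G) \geq i + d_{i+1} + 1$, which is exactly the hypothesis.

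The main step is condition (3), where the key observation is that $v$ itself becomes isolated in $G_k = G - N(v)$, since every one of its neighbors has been deleted; hence $|\is(G_k)| \geq 1$. Writing $i_0 = |\is(G_k)|$ and using $h(\overline{G_k}) \leq h(G)$, the required inequality $f(\overline{G_k}) + i_0 \geq f(G)$ reduces, after clearing denominators, to $i_0(h(G) - 1) \geq k$. Since $h(G) - 1 \geq k$ by the second clause of hypothesis (2) and $i_0 \geq 1$ by the observation above, this holds. The one point worth flagging is this role of $v$: it is never itself removed, yet its forced isolation in $G_k$ is exactly what produces the slack $i_0 \geq 1$ needed to absorb the $k$ vertex-deletions; the other verifications are purely formal rearrangements of the hypotheses into the language of Theorem \ref{ME}.
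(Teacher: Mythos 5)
Your proposal is correct and follows essentially the same route as the paper: both define $f(G) = |V(G)|/h(G)$, verify the three conditions of Theorem \ref{ME} via the monotonicity of $h$ and the hypotheses $i + d_{i+1} + 1 \leq h(G)$ and $h(G) \geq k+1$, and use the forced isolation of $v$ in $G_k$ to get $|\is(G_k)| \geq 1$ in condition (3). Your reduction of condition (3) to $|\is(G_k)|(h(G)-1) \geq k$ is just a cleaner restatement of the paper's chain of inequalities, and your derivation of condition (1) from the $k=0$ case of hypothesis (2) fills in a detail the paper leaves as ``immediately verified.''
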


\begin{proof}
Define a function $f:\mathcal{C} \rightarrow \mathbb{R}$ by $f(G) = \frac{|V(G)|}{h(G)}$.  We claim that $f$ satisfies the conditions of Theorem \ref{ME}.  Condition 1 is immediately verified.  For condition 2, fix $i$, and suppose $G_i - \st_{G_i} v_{i+1}$ has no isolated vertices.  Then 
\begin{align*}
f(G_i - \st_{G_i} v_{i+1}) + 1 &= \frac{|V(G)| - i - d_{i+1}-1}{h(G_i - \st_{G_i} v_{i+1})} + 1\\
& \geq \frac{|V(G)| - i - d_{i+1}-1}{h(G)} + 1 \\
& = \frac{|V(G)| - i - d_{i + 1} - 1 + h(G)}{h(G)} \\
& \geq \frac{|V(G)|}{h(G)} = f(G).
\end{align*}

Finally, for property 3, note that $G_k$ has at least one isolated vertex (namely, $v$), meaning $|\is(G_k)| \geq 1$.  Thus, we have
\begin{align*}
f(\overline{G_k}) + |\is(G_k)| & \geq \frac{|V(G)| - k - |\is(G_k)|}{h(\overline{G_k})} + \is(G_k)\\
& \geq \frac{|V(G)| - k - |\is(G_k)| + h(G)|\is(G_k)|}{h(G)} \\
& \geq \frac{|V(G)| - k  - |\is(G_k)| + (k + 1) |\is(G_k)|}{h(G)} \\
& \geq \frac{|V(G)| - k -|\is(G_k)| + k + |\is(G_k)|}{h(G)} = \frac{|V(G)|}{h(G)} = f(G),
\end{align*}
completing the proof.
\end{proof}

\begin{definition}
For a graph $G$ and a scalar $\alpha >0$, call an edge $(x, y)$ of $G$ \emph{$\alpha$-max} if it maximizes the quantity $\deg(x) + \alpha \deg(y)$. 
\end{definition}

Using Theorem \ref{secondmeta}, we can provide a bound for the projective dimension of a graph that has no induced $K_{1, m+1}$ as a subgraph.  First, we need the following lemma.

\begin{lemma}\label{propm}
Let $G$ be a graph on $n$ vertices such that $G[W]$ has at least one edge for any $W \subseteq V(G)$ with $|W| = m$ (or, equivalently, $\dim(\ind(G)) <  m - 1$).  Then $G$ contains a vertex of degree at least $\frac{n}{m-1} - 1$.  
\end{lemma}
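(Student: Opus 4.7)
The hypothesis that every $m$-vertex subset of $V(G)$ spans at least one edge is exactly the statement that the independence number satisfies $\alpha(G) \leq m-1$. So the lemma is really a contrapositive-style statement: small maximum degree forces a large independent set. The plan is therefore to establish the classical greedy bound $\alpha(G) \geq n/(\Delta(G)+1)$ and then rearrange.

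\medskip

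\noindent\textbf{Step 1 (greedy independent set).} Let $\Delta = \Delta(G)$ be the maximum degree of $G$. Construct an independent set $I$ iteratively as follows. Initialize $I = \emptyset$ and $H = G$; while $V(H) \neq \emptyset$, choose any vertex $v \in V(H)$, add $v$ to $I$, and delete $v$ together with all its $H$-neighbors from $H$. Each iteration removes at most $\deg_H(v) + 1 \leq \Delta + 1$ vertices from $H$, so the procedure runs for at least $\lceil n/(\Delta+1) \rceil$ iterations. Vertices added to $I$ in different iterations are non-adjacent (a vertex is only added after all its previously-chosen neighbors' neighborhoods have been excised), so $I$ is independent and $|I| \geq n/(\Delta+1)$.

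\medskip

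\noindent\textbf{Step 2 (combine with the hypothesis).} The hypothesis gives $|I| \leq \alpha(G) \leq m-1$. Combining with Step~1 yields
\[
\frac{n}{\Delta+1} \leq m-1,
\]
which rearranges to $\Delta \geq \frac{n}{m-1} - 1$. Since $\Delta$ is attained by some vertex, the lemma follows.

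\medskip

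\noindent\textbf{Expected obstacles.} There really is no obstacle; the main content is simply the folklore Caro–Wei style greedy bound, and the hypothesis is reformulated as $\alpha(G) \leq m-1$ in one line. The only thing to be a bit careful about is that the bound $n/(\Delta+1)$ is a real-number inequality and the lemma's conclusion is also real-valued, so no ceiling/floor issues arise when we rearrange.
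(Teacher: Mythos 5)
Your proof is correct. The reformulation of the hypothesis as $\alpha(G)\leq m-1$ is right, the greedy construction does give an independent set of size at least $n/(\Delta+1)$, and the rearrangement is clean (there are no rounding issues, as you note). Your route is genuinely different in organization from the paper's: the paper inducts on $m$, deleting a \emph{maximum-degree} vertex $v$ together with its neighborhood, applying the inductive hypothesis to the resulting graph (which satisfies the condition for $m-1$), and then using the maximality of $\deg v$ to close the inequality. Your argument avoids induction on $m$ entirely by isolating the classical bound $\alpha(G)\geq n/(\Delta(G)+1)$ as a self-contained step; the hypothesis enters only at the very end. The two proofs rest on the same counting (each chosen vertex ``accounts for'' at most $\Delta+1$ vertices), but yours is arguably more transparent and modular, while the paper's inductive phrasing matches the style of its other arguments (and its greedy step need not track an explicit independent set). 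One tiny point of care: the conclusion ``some vertex attains $\Delta$'' silently assumes $n\geq 1$, but that degenerate case is equally untreated in the paper and the claimed bound is vacuous there anyway.
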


\begin{proof}
We induct on $m$.  If $m = 2$ then $G$ is complete, and the bound holds.  Now suppose $m > 2$, and let $v$ be a vertex of $G$ of maximal degree.  Let $G'$ be $G$ with $v$ and its neighbors removed.  Then $G'[W]$ must have an edge for any $W\subseteq V(G')$ with $|W| = m-1$, since $G[W \cup \{v\}]$ must contain an edge, and this edge cannot have $v$ as an endpoint.  By induction, a maximal degree vertex of $G'$ has degree at least $\frac{|V(G')|}{(m-1) - 1} - 1 = \frac{n - \deg v - 1}{m - 2} - 1$.  Since this degree cannot exceed $\deg v$, we have 
\[
\frac{n - \deg v - 1}{m -2} - 1 \leq \deg v \Rightarrow n - m + 1 \leq (m-1) \deg v \Rightarrow \frac{n}{m-1} - 1 \leq \deg v.
\]
\end{proof}

\begin{theorem}\label{mcondition}
Suppose $G$ is a graph on $n$ vertices containing no induced $K_{1, m+1}$, and let $(x, y)$ be an $\left(\frac{m-1}{m}\right)$-max edge, with $d  = \deg(x) \geq \deg(y) = e$.  Then 
\[
\pd(G) \leq n \left(1 - \frac{1}{d+\frac{m-1}{m}e +1} \right).
\]
\end{theorem}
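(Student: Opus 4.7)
The plan is to apply Theorem \ref{secondmeta} with the function
\[
h(G) \;=\; 1 + \max_{uv \in E(G)} \Bigl(\max\{\deg u, \deg v\} + \tfrac{m-1}{m}\min\{\deg u, \deg v\}\Bigr),
\]
so that by hypothesis $h(G) = d + \tfrac{m-1}{m}e + 1$. Because removing a vertex only shrinks degrees and edge sets, and because $a + \alpha b$ (for $\alpha = \tfrac{m-1}{m} > 0$) is monotone in its arguments, $h(G - u) \le h(G)$, verifying condition (1) of Theorem \ref{secondmeta}.

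For condition (2) I take $v := x$, so the ordering is of the $k = d$ neighbors of $x$. Two ingredients drive the argument. First, the $\alpha$-maximality of $(x,y)$ together with $\alpha < 1$ and $d \ge e$ forces every neighbor $w \in N(x)$ to satisfy $\deg_G(w) \le e$; indeed, if $\deg_G(w) > e$ then a short case analysis on whether $\deg_G(w)$ is at most $d$ or exceeds $d$ shows that $\{x,w\}$ would beat $\{x,y\}$ in the $\alpha$-max functional. Second, since $x$ is joined to every vertex of $N(x)$ and $G$ has no induced $K_{1,m+1}$, the induced subgraph $H := G[N(x)]$ has independence number at most $m$; hence every induced subgraph of $H$ on $r \ge m+1$ vertices contains an edge, and by Lemma \ref{propm} (applied with parameter $m+1$) every such induced subgraph has a vertex of degree at least $r/m - 1$.

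I then build the ordering by a reverse-greedy procedure on $H$: let $u_j$ be a vertex of maximum degree in $H - u_1 - \cdots - u_{j-1}$, and set $v_i := u_{d - i + 1}$. With this choice $\{v_1, \dots, v_i\} = \{u_{d-i+1}, \dots, u_d\}$, so the number $p_{i+1}$ of vertices among $v_1, \dots, v_i$ adjacent in $G$ to $v_{i+1} = u_{d-i}$ equals $\deg_{H - u_1 - \cdots - u_{d-i-1}}(u_{d-i})$, which by the preceding paragraph is at least $(i+1)/m - 1$. Combining with $\deg_G(v_{i+1}) \le e$ gives $d_{i+1} \le e - p_{i+1} \le e + 1 - (i+1)/m$, and a direct manipulation shows that the required inequality $i + d_{i+1} + 1 \le h(G)$ reduces to $(m-1)(i+1) \le md - e$, which for $i \le d - 1$ is equivalent to $e \le d$. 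The auxiliary bound $h(G) \ge k + 1 = d + 1$ is immediate from $\tfrac{m-1}{m}e \ge 0$. Theorem \ref{secondmeta} now yields the claimed inequality for $\pd(G)$.

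The main obstacle is the bookkeeping between the reverse-greedy ordering on $H$ (the natural habitat for Lemma \ref{propm}) and the forward ordering required by Theorem \ref{secondmeta}, and confirming that the degree lower bounds from Lemma \ref{propm} are tight enough to absorb the gap between the crude bound $\deg_{G_i}(v_{i+1}) \le e$ and the tighter bound $\deg_{G_i}(v_{i+1}) \le d - i + \tfrac{m-1}{m}e$ that Theorem \ref{secondmeta} actually needs.
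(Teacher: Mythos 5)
Your proof is correct and follows essentially the same strategy as the paper's: apply Theorem \ref{secondmeta} with $h(G) = d + \frac{m-1}{m}e + 1$, order the neighbors of one endpoint of the $\left(\frac{m-1}{m}\right)$-max edge by a reverse-greedy maximum-degree rule, and combine Lemma \ref{propm} (via $K_{1,m+1}$-freeness) with the maximality of the edge $(x,y)$ to verify condition (2). The only substantive difference is a mirror-image choice: the paper deletes the $e$ neighbors of the low-degree endpoint $y$ and applies Lemma \ref{propm} to the set of non-neighbors of $v_{i+1}$ inside $N(y)$ (which cannot contain an independent $m$-set), whereas you delete the $d$ neighbors of $x$, apply the lemma to $G[N(x)]$ with parameter $m+1$, and compensate with the observation --- which checks out --- that maximality of $(x,y)$ forces every neighbor of $x$ to have degree at most $e$.
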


\begin{proof}
We use Theorem \ref{secondmeta} with the function $h(G) = d + \frac{m-1}{m}e +1$ where $(x, y)$ is an $\left(\frac{m-1}{m}\right)$-max edge of $G$ with $\deg x = d$ and $\deg y = e$.  The function $h$ is easily seen to be decreasing.  Now let $\{v_1, v_2, \ldots, v_e\}$ be the neighbors of $y$.  We reorder the neighbors of $y$ as follows.  Let $v_e$ be a vertex of maximal degree in the induced subgraph $G[\{v_1, v_2, \ldots, v_e\}]$, and in general let $v_i$ be a vertex of maximal degree in the subgraph $G[\{v_1, v_2, \ldots, v_i\}]$.  

As in Theorem \ref{secondmeta}, let $d_{i+1}$ denote the degree of $v_{i+1}$ in the induced subgraph $G_i = G - v_1 - v_2 - \cdots - v_i$.  We need to show that $h(G) \geq i + d_{i+1} + 1$ for all $i < e$.  Fix some $i$, let $G' = G[\{v_1, v_2, \ldots, v_{i+1}\}]$, and let $S \subseteq \{v_1, v_2, \ldots, v_i\}$ be the set of non-neighbors of $v_{i+1}$.  Writing $\delta$ for the degree of $v_{i+1}$ in $G'$, note that $|S| = i - \delta$.  Because $G$ is $K_{1, m+1}$-free, $S$ cannot have an independent set of size $m$; if it did, these $m$ vertices, together with $y$ and $v_{i+1}$, would form an induced $K_{1, m+1}$.  Thus, by Lemma \ref{propm}, some vertex of $S$ must have degree (in $G'$) at least $|S|/(m-1) - 1$.  Because $v_{i+1}$ was chosen as a maximal degree vertex of $G'$, we have $\delta \geq |S|/(m-1) -1 = (i - \delta)/(m-1) -1 \Rightarrow \delta\geq  (i - m +1)/m$.  Using the fact that $i \leq e-1$, we have 
\begin{align*}
i + d_{i+1} + 1 &= i + \deg(v_{i+1}) - \delta + 1 \\
& \leq i + \deg(v_{i+1}) - \frac{i - m + 1}{m} + 1 \\
& = \deg(v_{i+1}) + \frac{m-1}{m}i +\frac{m-1}{m} +1 \\
& \leq \deg(v_{i+1}) + \frac{m-1}{m}e +1.
\end{align*}
Finally, because $(v_{i+1}, y)$ is an edge of $G$, the above quantity is $\leq h(G)$.  It remains to be shown that $h(G) \geq e+1$, but this is immediate.  
\end{proof}

Recall that, for $\ell \geq 1$, the $\mathbb{Z}^\ell$ \emph{lattice} is the infinite graph whose vertices are points in $\mathbb{R}^\ell$ with integer coordinates, where two vertices are connected by an edge whenever they are a unit distance apart. 

\begin{theorem}\label{zell}
Let $G$ be a  subgraph of the $\mathbb{Z}^\ell$ lattice with $n$ vertices. Then $\pd(G) \leq n(1- \frac{1}{2\ell + 1})$. 
\end{theorem}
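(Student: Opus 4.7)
My plan is to apply Theorem \ref{secondmeta} with the constant function $h(G) \equiv 2\ell + 1$. Condition (1) is automatic. For condition (2), when $G$ has no edges any vertex works vacuously (since $h(G) \geq 1$). Otherwise, I take $v$ to be the vertex of $G$ that is lexicographically smallest among those of positive degree, where the lex order is inherited from $\mathbb{Z}^\ell$. By minimality of $v$, each neighbor of $v$ in $G$ must be of the form $v + e_j$ for some $j \in \{1, \ldots, \ell\}$ (with $e_j$ the $j$-th standard basis vector), because $v - e_i \in G$ would be lex-smaller than $v$ and would have $v$ as a neighbor, contradicting the choice of $v$. Letting $J = \{s_1 < s_2 < \cdots < s_k\} \subseteq \{1, \ldots, \ell\}$ index these neighbors, we get $k \leq \ell$, so $h(G) = 2\ell + 1 \geq k+1$ as required.

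I then order the neighbors of $v$ as $v + e_{s_1}, \ldots, v + e_{s_k}$. The key geometric estimate is
\[
\deg_G(v + e_{s_j}) \leq 2\ell + 1 - s_j,
\]
proved by the same lex-minimality argument: among the $2\ell$ potential lattice-neighbors of $v + e_{s_j}$, the $s_j - 1$ points $v + e_{s_j} - e_i$ with $i < s_j$ cannot lie in $G$, since each would be lex-smaller than $v$ and adjacent to $v + e_{s_j} \in G$. Combined with the obvious bound $d_j \leq \deg_G(v + e_{s_j})$ and the inequality $s_j \geq j$, this yields
\[
(j-1) + d_j + 1 \;\leq\; (j-1) + (2\ell + 1 - s_j) + 1 \;\leq\; 2\ell + 1 \;=\; h(G),
\]
completing the verification of condition (2). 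The bound $\pd(G) \leq n(1 - 1/(2\ell+1))$ then follows directly from Theorem \ref{secondmeta}.

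The main subtlety is that a naive appeal to Theorem \ref{mcondition} with $m = 2\ell$ (the lattice is clearly $K_{1, 2\ell+1}$-free) only yields the weaker bound $n(1 - 1/(4\ell))$, because two adjacent lattice points can simultaneously attain full degree $2\ell$. The improvement here comes from exploiting the geometry of the lattice directly: picking a lex-minimal vertex $v$ simultaneously caps the number of its neighbors (by $\ell$) and bounds the degree of the $s$-th neighbor by $2\ell + 1 - s$, so that the "graded" degree condition of Theorem \ref{secondmeta} is met on the nose. I expect this geometric degree bound to be the one substantive step; everything else is bookkeeping.
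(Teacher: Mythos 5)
Your proof is correct and follows essentially the same route as the paper's: apply Theorem \ref{secondmeta} with the constant function $h \equiv 2\ell+1$ to a lex-minimal vertex (the paper normalizes by translation so that this vertex is the origin), whose neighbors are the points $v+e_{s_j}$, each of degree at most $2\ell+1-s_j$. The only cosmetic caveat is that, since $G$ need not be an induced subgraph of the lattice, the points $v+e_{s_j}-e_i$ could in principle be (isolated or non-adjacent) vertices of $G$; the correct statement is that they cannot be $G$-neighbors of $v+e_{s_j}$, which is all your degree count needs.
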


\begin{proof}
We view each $v \in V(G)$ as an $\ell$-tuple in $\mathbb{Z}^d$, and write $v^i$ to denote its $i^\text{th}$ coordinate.  Without loss, we may assume that $\min\{v^1: v\in V\} = 0$ (otherwise, we can simply translate $G$ so that this is true).  Similarly, restricting to those $v\in V$ with first coordinate zero, we may also assume that $\min\{ v^2 : v\in V, v^1 = 0\} = 0$.  In general, we can assume that $\min\{v^i : v\in V, v^1 = v^2 = \cdots = v^{i-1} = 0\} = 0$ for all $i$.  Thus, $G$ contains no vertex whose first non-zero coordinate is negative.  

Now let $v$ denote the origin (which, given the above assumptions, must be a vertex of $G$).  For any $i$ let $v_i$ denote the vertex with $v_i^i = 1$ and $v_i^j = 0$ for $j \neq i$.  Note that the set of neighbors of $v$ is contained in $\{v_1, v_2, \ldots, v_\ell\}$.  Now fix $i$, and let $w \in V$ be a neighbor of $v_i$.  Then only one coordinate of $w$ can differ from $v_i$.  If this coordinate is $w^j$ for $j < i$, then we can only have $w^j = 1$, since $G$ contains no vertex whose first non-zero coordinate is negative.  If $w$ differs from $v_i$ in the $j^\text{th}$ component for $j \geq i$, then either $w^j = v^j - 1$ or $w^j = v^j +1$.  Thus, $\deg(v_i) \leq i - 1 + 2(\ell - i + 1)$.

We apply Theorem \ref{secondmeta} with $h(G) = 2\ell +1$.  List the neighbors of $v: v_{i_1}, v_{i_2}, \ldots, v_{i_k}$, where $i_1 < i_2 < \cdots < i_k$.  Then, for all $j$, we have
\[
j + \deg(v_{i_{j+1}}) + 1 \leq i_j + (i_j + 2(\ell - i_j)) + 1 = 2\ell + 1 = h(G).
\]
The function $h(G)$ is easily seen to satisfy the other requirements of Theorem \ref{secondmeta}.
\end{proof}

\begin{corollary}\label{inddim}
Suppose $G$ is a graph on $n$ vertices.  If the dimension of $\ind(G)$ is $< m$, then $\pd(G)$ satisfies the bound of Theorem \ref{mcondition}.
\end{corollary}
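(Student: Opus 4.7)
The plan is to reduce the corollary to an immediate application of Theorem \ref{mcondition} by translating the topological hypothesis on $\ind(G)$ into the forbidden-subgraph hypothesis of that theorem. The core observation is that $\dim(\ind(G)) < m$ constrains the sizes of independent sets in $G$, and induced stars $K_{1,m+1}$ produce large independent sets.

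First I would unpack the dimension hypothesis. By definition of the dimension of a simplicial complex, $\dim(\ind(G)) < m$ means that every face of $\ind(G)$ has cardinality at most $m$. Since the faces of $\ind(G)$ are exactly the independent sets of $G$, this says that $G$ has no independent set of size $m+1$.

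Next I would argue that $G$ contains no induced $K_{1,m+1}$. Suppose, toward a contradiction, that $G$ has such an induced subgraph, with central vertex $v$ and leaves $w_1, \ldots, w_{m+1}$. Because the subgraph is induced, no two $w_i$'s are adjacent in $G$, so $\{w_1, \ldots, w_{m+1}\}$ is an independent set of $G$ of size $m+1$, contradicting the previous paragraph. Therefore $G$ is $K_{1,m+1}$-free.

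Finally, with the $K_{1,m+1}$-freeness in hand, I would invoke Theorem \ref{mcondition} directly to obtain the stated bound on $\pd(G)$. There is no substantive obstacle: the entire content of the corollary is the elementary translation from a bound on $\dim(\ind(G))$ to the absence of an induced claw of the appropriate size, after which the previous theorem does all the work.
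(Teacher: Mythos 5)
Your proposal is correct and follows essentially the same route as the paper's own proof: both translate $\dim(\ind(G)) < m$ into the absence of an independent set of size $m+1$, observe that the leaves of an induced $K_{1,m+1}$ would form such a set, and then invoke Theorem \ref{mcondition}. Your version just spells out the contradiction with the leaves a bit more explicitly.
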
 

\begin{proof} 
Since $\ind(G)$ cannot contain an $m$-dimensional face, the induced graph $G[W]$ must contain an edge for every subset of vertices $W$ with $|W| \geq m+1$.  Thus, $G$ cannot contain an induced $K_{1, m+1}$, and so Theorem \ref{mcondition} applies.  
\end{proof}

\begin{corollary}\label{chromatic}
Let $G$ be a graph on $n$ vertices.  Then 
\[
\pd(G) \leq n \left( 1 - \frac{1}{d+\frac{\chi(G^c) - 1}{\chi(G^c)} e + 1} \right),
\] 
where $\chi(G^c)$ is the chromatic number of $G^c$ and $d = \deg(x) \geq \deg(y) = e$ for some $\left(\frac{\chi(G^c) - 1}{\chi(G^c)}\right)$-max edge $(x, y)$. 
\end{corollary}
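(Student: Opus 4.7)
The plan is to derive this corollary directly from Theorem \ref{mcondition} by choosing $m = \chi(G^c)$. The two bounds have identical shape: replacing $m$ with $\chi(G^c)$ in Theorem \ref{mcondition} produces exactly the bound asserted in Corollary \ref{chromatic}. So all that needs verification is that the hypothesis of Theorem \ref{mcondition} holds for this value of $m$, namely that $G$ contains no induced $K_{1, \chi(G^c)+1}$.

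The key step is a short argument connecting induced stars in $G$ to cliques in $G^c$. Suppose for contradiction that $G$ has an induced $K_{1,\chi(G^c)+1}$, with center $v$ and leaves $\{w_1, w_2, \ldots, w_{\chi(G^c)+1}\}$. By the definition of induced subgraph, the leaves are pairwise non-adjacent in $G$, so they form an independent set of size $\chi(G^c)+1$ in $G$. Passing to the complement, $\{w_1, \ldots, w_{\chi(G^c)+1}\}$ is a clique in $G^c$, forcing $\omega(G^c) \geq \chi(G^c) + 1$. This contradicts the elementary inequality $\omega(G^c) \leq \chi(G^c)$, since any proper coloring of $G^c$ must assign distinct colors to the vertices of any clique.

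Having ruled out induced copies of $K_{1, \chi(G^c)+1}$, applying Theorem \ref{mcondition} with $m = \chi(G^c)$ to the $\left(\frac{\chi(G^c)-1}{\chi(G^c)}\right)$-max edge $(x,y)$ yields the claimed bound immediately. There is no real obstacle here: the work has already been done in Theorem \ref{mcondition}, and the only content is the observation that the chromatic number of the complement provides a convenient upper bound on the independence number, and hence on the size of induced stars in $G$.
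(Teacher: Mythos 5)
Your proposal is correct and follows essentially the same route as the paper: both reduce to Theorem \ref{mcondition} with $m = \chi(G^c)$ by observing that an independent set of size $\chi(G^c)+1$ in $G$ would be a clique of that size in $G^c$, contradicting $\omega(G^c) \leq \chi(G^c)$. The only cosmetic difference is that you verify the no-induced-$K_{1,m+1}$ hypothesis directly, whereas the paper passes through the intermediate Corollary \ref{inddim} on the dimension of $\ind(G)$; the underlying argument is identical.
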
 

\begin{proof}
Let $\chi(G^c) = m$.  Then $G^c[W]$ cannot be complete for any $W \subseteq V$ with $|W| = m+1$, meaning $G[W]$ has at least one edge for any such $W$, and we can apply Corollary \ref{inddim}. 
\end{proof}

\begin{example}
{\rm As an example, let $G = K_{d, d}$.  Then $G^c$ is the disjoint union of two copies of $K_d$, meaning $\chi(G^c) = d$, and every edge is $\left(\frac{d-1}{d}\right)$-max.  Using Corollary \ref{chromatic}, we obtain
\[
\pd(G) \leq 2d \left( 1 - \frac{1}{d+ \frac{d-1}{d}d + 1}\right) = 2d-1,
\]
meaning our bound is sharp in this case (see \cite{manoj}).}
\end{example}

\begin{example}
{\rm Let $G = K_n$.  Then $d = n-1$, and $G^c$ is a collection of isolated vertices, meaning $\chi(G^c) =1 $.  Using Corollary \ref{chromatic} again, we have
\[
\pd(G) \leq n \left( 1 - \frac{1}{(n-1)+ \frac{1-1}{1}(n-1) + 1}\right) = n-1,
\]
so our bound is sharp in this case as well (see \cite{manoj}). }
\end{example}

It is interesting to note that the chromatic number of the complement of a graph is also related to the regularity of the graph via the following easy observation.

\begin{observation}\label{regchrom}
For any graph $G$, we have $\reg(G) \leq \chi(G^c)$.
\end{observation}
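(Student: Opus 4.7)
The plan is to translate the regularity bound into a statement about the dimensions of the independence complexes of induced subgraphs, then use the fact that simplicial homology vanishes above the complex's dimension. By the regularity half of Corollary~\ref{hochster}, $\reg(G)$ is the largest $k$ for which $\tilde{H}_{k-1}(\ind(G[W])) \neq 0$ for some $W \subseteq V(G)$. It therefore suffices to prove that every such $\ind(G[W])$ has dimension at most $\chi(G^c) - 1$.

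Set $m = \chi(G^c)$. A proper $m$-coloring of $G^c$ partitions $V(G)$ into color classes $V_1, V_2, \ldots, V_m$, each independent in $G^c$, hence a clique in $G$. For any $W \subseteq V(G)$, the induced subgraph $G[W]$ inherits this partition into $m$ cliques (some possibly empty) via $W_i = V_i \cap W$. Since an independent set of $G[W]$ can meet each clique $W_i$ in at most one vertex, every face of $\ind(G[W])$ has at most $m$ vertices, so $\dim \ind(G[W]) \le m-1$.

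Reduced simplicial homology vanishes above the dimension of the complex, so $\tilde{H}_{k-1}(\ind(G[W])) = 0$ whenever $k - 1 > m - 1$, i.e., whenever $k > m$. Combined with the characterization of $\reg(G)$ from Corollary~\ref{hochster}, this yields $\reg(G) \leq m = \chi(G^c)$.

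There is no real obstacle here; the argument is a clean combination of Hochster's formula with the elementary combinatorial fact that a proper coloring of $G^c$ is the same as a partition of $V(G)$ into cliques of $G$. The only care needed is to remember that $\chi(G^c)$ bounds the clique cover number of $G$, not its independence number, and to note that the clique cover of $V(G)$ restricts to a clique cover of $W$ for each $W \subseteq V(G)$, so the dimension bound applies uniformly.
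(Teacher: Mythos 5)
Your proof is correct and follows essentially the same route as the paper: both bound the dimension of every induced subcomplex of $\ind(G)$ by $\chi(G^c)-1$ via the inequality $\alpha(G)\leq\chi(G^c)$ (you phrase it as a clique cover of $G$, the paper as a clique in $G^c$, which is the same fact), and then conclude with the regularity half of Hochster's formula. Your version is slightly more explicit about why the bound passes to every induced subgraph $G[W]$, but there is no substantive difference.
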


\begin{proof}
The maximum cardinality of an independent set in $G$ is equal to the maximum cardinality of a clique in $G^c$, which is less than or equal to $\chi(G^c)$.  Thus no induced subcomplex of $\ind(G)$ can have homology in a dimension higher than $\chi(G^c)-1$.  The observation then follows from Hochster's formula (Theorem \ref{hochster}).
\end{proof}

\section{Domination Parameters and Projective Dimension}\label{dominationsection}

A central theme in this paper is the relationship between the projective dimension of a graph's edge ideal and various graph domination parameters.  The subject of domination in graphs has been well-studied, and is of special interest to those working in computer science and combinatorial algorithms. 

We first recall a catalog of basic domination parameters.  Let $G$ be a graph, and recall that a subset $A \subseteq V(G)$ is \emph{dominating} if every vertex of $V(G) \setminus A$ is a neighbor of some vertex in $A$ (that is, $N(A) \cup A = V(G)$.  
\begin{enumerate}
\item $\gamma(G) = \min \{|A| : A \subseteq V(G)$ is a dominating set of $G\}.$\\
\item $i(G) =  \min \{|A| : A \subseteq V(G)$ is independent and a dominating set of $G\}.$ \\

If $G$ is empty, we set $\gamma(G) = i(G) =0$.  For any subset $X \subseteq V(G)$, we let $\gamma_0(X, G)$ denote the minimal cardinality of a subset $A \subseteq V(G)$ such that $X \subseteq N(A)$ (note that we allow $X \cap A \neq \emptyset$).\\

\item $\gamma_0(G) = \gamma_0(V(G), G)$.  That is, $\gamma_0(G)$ is the least cardinality of a subset $A \subseteq V(G)$ such that every vertex of $G$ is adjacent to some $a \in A$. \\

\item $\tau(G) = \max \{ \gamma_0(A, G): A\subseteq V(G)$ is independent$\}$. \\

We also introduce a new graph domination parameter, which we call \emph{edgewise domination}.  Note that this differs from the existing notion of \emph{edge-domination}, which is often ignored in the literature, as it is equivalent to domination in the associated line graph.  \\

\item If $E$ is the set of edges of $G$, we say a subset $F$ of $E$ is \emph{edgewise dominant} if any $v \in G$ is adjacent to an endpoint of some edge $e \in F$.  We define 
\[
\epsilon(G) = \min\{|F| : F\subseteq E \ \ {\text{is edgewise dominant}} \}.
\] 
\end{enumerate}

The following proposition compares the domination parameters.  Part of this proposition is likely well-known, but we include a proof as it seems to lack a convenient reference.  See also Example \ref{eptau}. 

\begin{proposition}\label{gammaind}
For any $G$, $i(G) \leq \gamma(G)$ and  $\tau(G) \leq \gamma(G)$. Furthermore $\epsilon(G) \geq \frac{\gamma_0(G)}{2}$.
\end{proposition}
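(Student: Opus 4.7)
The proposition packages three inequalities that I would treat in isolation, in roughly increasing order of difficulty.

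The inequality $\epsilon(G) \geq \gamma_0(G)/2$ is the most direct. Given a minimum edgewise-dominant edge set $F$, let $V(F) \subseteq V(G)$ denote the set of all endpoints of edges of $F$, so $|V(F)| \leq 2|F| = 2\epsilon(G)$. The defining property of edgewise-domination says precisely that every $v \in V(G)$ has a neighbor that is an endpoint of some $e \in F$, i.e., $V(G) \subseteq N(V(F))$. Thus $V(F)$ is a candidate for the minimization computing $\gamma_0(G)$, giving $\gamma_0(G) \leq 2\epsilon(G)$.

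For $\tau(G) \leq \gamma(G)$, I would fix an independent set $A \subseteq V(G)$ together with a minimum dominating set $D$, and produce $B \subseteq V(G)$ with $|B| \leq |D|$ and $A \subseteq N(B)$; this forces $\gamma_0(A, G) \leq \gamma(G)$, and maximizing over $A$ finishes the claim. I would build $B$ as the image of a map $\phi : A \to V(G)$ defined by cases on each $v \in A$: if $v$ has a neighbor in $D$, pick $\phi(v)$ to be one such neighbor; otherwise $v$ must itself lie in $D$ (since $D$ dominates $v$), and I pick $\phi(v)$ to be any neighbor of $v$, which necessarily lies outside $D$. Each $v \in A$ then has the neighbor $\phi(v) \in B$. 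The counting is the crux: writing $A_1 = \{v \in A : v$ has a neighbor in $D\}$ and $A_2 = A \setminus A_1 \subseteq D$, the independence of $A$ forces every value $\phi(v)$ to lie outside $A$, so $\phi(A_1) \subseteq D \setminus A_2$ while $\phi(A_2) \subseteq V \setminus D$, and summing the two disjoint contributions gives $|B| \leq |D \setminus A_2| + |A_2| = |D|$.

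The remaining inequality $i(G) \leq \gamma(G)$ is the subtlest. My plan is to start from a minimum dominating set $D = \{d_1, \ldots, d_k\}$ and extract an independent dominating set of no larger size by an inductive selection: for each $i$, try to pick $u_i$ in the ``newly dominated'' region $N[d_i] \setminus \bigcup_{j<i} N[d_j]$ with $u_i$ non-adjacent to every previously chosen $u_j$. If the selection can always be made, the resulting set $\{u_1, \ldots, u_k\}$ is independent, still covers $V(G)$ through the closed neighborhoods $N[d_i]$, and has size at most $|D|$. The main obstacle I anticipate is showing that the independence constraint can always be satisfied at each step; I would expect to lean on the minimality of $D$ (which prevents any $d_i$ from being redundant) together with a carefully chosen ordering of $D$ to get past this point.
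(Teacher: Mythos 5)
Your treatments of the second and third inequalities are correct and essentially coincide with the paper's. For $\epsilon(G)\geq\gamma_0(G)/2$ the paper likewise passes from an edgewise-dominant edge set to its (at most twice as large) set of endpoints. For $\tau(G)\leq\gamma(G)$ the paper starts from a minimum dominating set $X$ and swaps each vertex of $X\cap A$ for one of its neighbors, using the independence of $A$ to see that no needed neighbor is lost; your map $\phi$ performs the same repair with slightly different bookkeeping, and your count $|B|\leq |D|$ goes through.

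The first inequality is where the proposal fails, and the failure is more basic than the ``obstacle'' you flag. As printed, $i(G)\leq\gamma(G)$ is false: for the double star (vertices $a,b,c,d,e,f$ with edges $(a,b),(a,c),(a,d),(b,e),(b,f)$) one has $\gamma=2$ via $\{a,b\}$, while every independent dominating set has at least $3$ vertices (it cannot contain both $a$ and $b$, so it must pick up two leaves on one side), giving $i=3$. The paper surely intends the standard inequality $\gamma(G)\leq i(G)$, which is the only ``obvious'' one: every independent dominating set is in particular a dominating set, so the minimum defining $\gamma$ is taken over a larger family. Consequently no construction extracting an independent dominating set of size at most $|D|$ from a minimum dominating set $D$ can exist in general. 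Your particular scheme also has an internal flaw beyond the independence issue: choosing $u_i\in N[d_i]\setminus\bigcup_{j<i}N[d_j]$ does not give $N[u_i]\supseteq N[d_i]$, so the selected set need not dominate even when the independence constraint can be met (in the double star, $u_1=a$, $u_2=e$ is independent of size $|D|$ but leaves $f$ undominated). The fix is to prove $\gamma(G)\leq i(G)$ by the one-line containment-of-feasible-sets argument and to note that the inequality in the statement should be read in that direction.
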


\begin{proof}
The first inequality is obvious.  Let $X\subseteq V$ be a dominating set of $G$ of minimal cardinality, and let $A\subseteq V$ be an independent set with $\tau(G) = \gamma_0(A , G)$.  Then $A \subseteq (N(X) \cup X)$, by definition.  If $x\in A \cap X$, then $N(x) \cap A = \emptyset$ (otherwise $A$ would not be independent).  For each $x \in X \cap A$, replace $x$ with one if its neighbors, and call the resulting set $X'$.  Then $A \subseteq N(X')$.  Since $|X| = |X'| \geq \gamma_0(A, G)$, we have $\gamma(G) = |X| = |X'| \geq \tau(G)$. 

We now prove the last inequality. Indeed, let $E$ be the set of edges of $G$, and let $X \subseteq E$ be an edgewise-dominant set of $G$.  If we let $A$ be the set of vertices in edges of $E$, then $A$ is easily seen to be strongly dominant, meaning $|E| \geq \frac{|A|}{2} \geq \frac{\gamma_0(G)}{2}$.
\end{proof}

In this section we prove several bounds for the projective dimension of an arbitrary graph.  Here we state the amalgam of the results that follow.  

\begin{corollary}\label{amalgam}
Let $G$ be a graph on $n$ vertices (and without isolated vertices).  Then
\[
n - i(G) \leq \pd(G) \leq n - \max\{ \epsilon(G), \tau(G)\}.
\]
\end{corollary}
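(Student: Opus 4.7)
The corollary packages three inequalities: the lower bound $\pd(G) \geq n - i(G)$ and the upper bounds $\pd(G) \leq n - \epsilon(G)$ and $\pd(G) \leq n - \tau(G)$. My plan is to prove each separately, handling the lower bound algebraically via big height and both upper bounds combinatorially via Theorem \ref{ME}.

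For the lower bound, I would invoke the standard inequality $\pd(S/I) \geq \bh(I)$: at a minimal prime $P$ of maximal height $h$, localization turns $S/I$ into an Artinian quotient of the $h$-dimensional regular local ring $S_P$, so Auslander--Buchsbaum yields $\pd_{S_P}(S_P/I_P) = h$, and $\pd_S(S/I) \geq \pd_{S_P}(S_P/I_P)$ since localization does not raise projective dimension. The minimal primes of $I(G)$ are exactly the ideals generated by the variables in a minimal vertex cover, and minimal vertex covers are complements of maximal independent sets. Since an independent set is dominating precisely when it is maximal, the largest minimal vertex cover has size $n - i(G)$, so $\bh(I(G)) = n - i(G)$ and hence $\pd(G) \geq n - i(G)$.

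For $\pd(G) \leq n - \epsilon(G)$, I would apply Theorem \ref{ME} with $f(G) = \epsilon(\overline{G})$. The key combinatorial estimate is that, for any non-isolated vertex $x$ with neighbor $y$, one has $\epsilon(G - \st(x)) + 1 \geq \epsilon(G)$: adjoining $(x, y)$ to any minimum edgewise-dominating set of $G - \st(x)$ produces one for $G$, since every vertex of $\st(x)$ is adjacent to $x$ or equals $x$ and is thus covered by an endpoint of the new edge. The analogous argument for $\pd(G) \leq n - \tau(G)$ uses $f(G) = \tau(G)$ and the estimate $\tau(G - \st(v)) + 1 \geq \tau(G)$, which follows by restricting a witness independent set $A$ achieving $\tau(G)$ to $A \setminus \st(v)$ and extending a minimum dominator of this restriction by at most one vertex (either $v$ itself, or a neighbor of $v$, depending on whether $v \in A$).

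The main obstacle is condition (3) of Theorem \ref{ME}, which demands $f(\overline{G_k}) + |\is(G_k)| \geq f(G)$. Since both $\epsilon$ and $\tau$ can drop by a full unit under a single-vertex deletion while $|\is(G - v)|$ stays at zero, the naive choice $k = 1$ is generally insufficient. My plan is to choose the sequence $(v_1, \ldots, v_k)$ as the full neighbor list of a carefully picked vertex $w$, so that $w$ becomes isolated in $G_k$ and contributes $|\is(G_k)| \geq 1$. Iterating the single-step estimates along this sequence, together with the buffer from $w$, produces condition (3). The technical subtleties lie in selecting $w$---as an endpoint of an edge in a minimum edgewise-dominating set for the $\epsilon$-bound, or from the support of an optimal dominator of a witness independent set for the $\tau$-bound---and in verifying the iterated estimates, after which the three inequalities combine to give the corollary.
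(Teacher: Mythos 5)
Your lower bound via $\bh(I(G))$ and the correspondence between maximal independent (equivalently, independent dominating) sets and minimal vertex covers is exactly the paper's route (Observations \ref{bh} and \ref{vertexcover}, Proposition \ref{pdlowbound}), and your $\epsilon$-bound follows the paper's proof of Theorem \ref{edgedom} (Theorem \ref{ME} applied to $\epsilon$, adjoining one edge at the deleted star). One caution even there: the single-step claim ``$\epsilon(G-\st x)+1\geq \epsilon(G)$ for \emph{any} non-isolated $x$'' is false as stated --- for $P_5=a\text{-}b\text{-}c\text{-}d\text{-}e$ and $x=c$ one has $\epsilon(\overline{G-\st c})=0$ but $\epsilon(P_5)=2$; the edge $(c,b)$ does not dominate $e$. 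The estimate must be run along the neighbors of a single anchored vertex $v$, with the added edge always incident to $v$, as in the paper.

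The genuine gap is the $\tau$-bound. Your plan is to feed $f=\tau$ into Theorem \ref{ME}, but $\tau$ does not satisfy its hypotheses, and your key estimate $\tau(G-\st v)+1\geq\tau(G)$ is false. Take $G=P_4=a\text{-}b\text{-}c\text{-}d$: here $\tau(P_4)=2$ (witnessed by $A=\{a,d\}$), but $G-\st b=\{d\}$ is a single isolated vertex with $\tau=0$, so the inequality fails; the problem is that $d\in A$ becomes isolated and cannot be dominated at all in $G-\st b$, so no ``extension by one vertex'' recovers a dominator of $A$. Worse, one checks that \emph{no} choice of vertex sequence $v_1,\ldots,v_k$ makes conditions (2) and (3) of Theorem \ref{ME} hold for $f=\tau$ on $P_4$: condition (2) at step $0$ forces $v_1\in\{a,d\}$, after which $G_1\cong P_3$ and every further star-deletion drops $\tau$ to $0$, while $k=1$ fails condition (3) since $\tau(P_3)+|\is(P_3)|=1<2$. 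So the strategy cannot be repaired by a cleverer choice of $w$; your sketch of condition (3) has nothing to iterate. This is precisely why the paper proves Theorem \ref{pdind} \emph{not} via Theorem \ref{ME} but by a direct induction on $|V(G)|$ using only the exact sequence of Lemma \ref{pdinduct}: in each of the two branches ($G-\st v$ versus $G-v$, with $v$ chosen from an optimal dominating set $X$ of the witness set $A$) it explicitly isolates the set $B\subseteq A$ of witness vertices that become isolated, passes to $\overline{G-\st v}$ or $\overline{G-v}$, and shows $\gamma_0(A-B,\cdot)\geq\gamma_0(A,G)-|B|-1$, so that the loss in $\tau$ is exactly offset by the drop $|B|+1$ in the vertex count. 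You need that bookkeeping; the Theorem \ref{ME} framework cannot absorb it because the isolated witness vertices carry no weight in $f=\tau$.
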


Using the tools developed in Section \ref{metas}, we now turn to bounding the projective dimensions of graphs via various domination parameters.  Our first two results of this section, Theorems \ref{edgedom} and \ref{pdind}, hold for all graphs.

\begin{theorem}\label{edgedom}
Let $G$ be a graph on $n$ vertices.  Then
\[
\pd(G) \leq n - \epsilon(G).
\]
\end{theorem}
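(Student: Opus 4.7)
The plan is to apply the meta-theorem (Theorem~\ref{ME}) with $f = \epsilon$, extended to all graphs by the convention $f(G) = \epsilon(\overline G) + |\is(G)|$ so that $f$ is well-defined throughout the induction and agrees with $\epsilon$ on graphs without isolated vertices. For a graph $G$ with at least one edge, I would fix a minimum edgewise dominating set $F = \{e_1,\ldots,e_{\epsilon(G)}\}$ with $e_j = (x_j,y_j)$, and first attempt $k = 1$ with $v_1 = x_1$.

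For condition~(2), which reads $\epsilon(G - \st x_1) + 1 \geq \epsilon(G)$, the verification is by direct construction. Start from a minimum edgewise dominating set $F''$ of $\overline{G - \st x_1}$. For each isolated vertex $v$ of $G - \st x_1$, all of $v$'s $G$-neighbors lie in $N(x_1)$; add the auxiliary edge $(v, w_v)$ for some $w_v \in N_G(v) \cap N(x_1)$. Finally adjoin the edge $(x_1, y_1)$, which dominates $\st x_1$ in $G$. The resulting set dominates all of $G$ and has size $\epsilon(\overline{G - \st x_1}) + |\is(G - \st x_1)| + 1$, which is exactly $f(G - \st x_1) + 1$ under the extended convention.

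For condition~(3), which reads $\epsilon(\overline{G - x_1}) + |\is(G - x_1)| \geq \epsilon(G)$, the key observation is that $\is(G - x_1)$ is precisely the set of pendant neighbors of $x_1$ in $G$. Starting from a minimum edgewise dominating set of $\overline{G - x_1}$ and adjoining the edge $(x_1, p)$ for each pendant $p$, one obtains an edgewise dominating set of $G$ of size $\epsilon(\overline{G - x_1}) + |\is(G - x_1)|$: the added edges cover both $x_1$ (via the vertex $x_1$) and each pendant (via the endpoint $p$).

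The main obstacle is condition~(3) when $x_1$ has no pendants, as happens for instance in $C_5$: here $k = 1$ does not furnish enough isolated vertices to attain $\epsilon(G)$. The resolution is to enlarge $k$, choosing successive $v_j = x_j$ as endpoints of further edges of $F$. Each iterated deletion introduces additional isolated vertices into the successive graphs $G_i$, while preserving the analogous construction for condition~(2) at every intermediate stage. Eventually --- after at most $\epsilon(G)$ steps --- the count for $G_k$ satisfies condition~(3) via a pairing argument: each edge $e_j$ contributes either via its surviving endpoint $y_j$ to $\epsilon(\overline{G_k})$ or via an isolated vertex to $|\is(G_k)|$, yielding $\epsilon(\overline{G_k}) + |\is(G_k)| \geq \epsilon(G)$ and completing the application of Theorem~\ref{ME}.
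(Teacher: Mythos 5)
Your instinct to run Theorem~\ref{ME} with the extension $f(G)=\epsilon(\overline{G})+|\is(G)|$ is sound (some such convention is in fact needed to make sense of condition (1)), and your verification of condition (2) at stage $i=0$ is correct --- indeed it works for \emph{any} non-isolated vertex $v_1$, not just an endpoint of $e_1$. The gap is in your choice of the deletion sequence. Taking $v_1,\dots,v_k$ to be endpoints $x_1,\dots,x_k$ of distinct edges of a minimum edgewise dominating set does not satisfy condition (2) at intermediate stages $i\geq 1$: there you must show $f(G_i-\st_{G_i}x_{i+1})+1\geq \epsilon(G)$, and the ``$+1$'' buys only one extra edge, which must simultaneously re-dominate $\st_{G_i}(x_{i+1})$ \emph{and all previously deleted vertices} $x_1,\dots,x_i$. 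Since the $x_j$ have no common neighbor, no single edge can do this, and the inequality genuinely fails. Concretely, let $G=C_5\sqcup K_3$ with $C_5$ on $1,2,3,4,5$ and $K_3$ on $\{a,b,c\}$; then $F=\{(1,2),(a,b),(3,4)\}$ is a minimum edgewise dominating set and $\epsilon(G)=3$. Taking $v_1=x_1=1$ and $v_2=x_2=a$ gives $G_1-\st_{G_1}(a)=P_4$ on $\{2,3,4,5\}$, which has no isolated vertices, so $f(G_1-\st_{G_1}(a))+1=\epsilon(P_4)+1=2<3$. Some orderings of the $x_j$ happen to work for this $G$, but your argument neither specifies an ordering nor proves a good one exists; likewise the ``pairing argument'' for condition (3) is only asserted (note, e.g., that $y_j$ may coincide with some $x_l$ and be deleted).

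The missing idea, which is how the paper proceeds, is to take $v_1,\dots,v_k$ to be \emph{all the neighbors of one fixed vertex} $v$. Then every vertex deleted up to stage $i+1$ is either a neighbor of $v$ (namely $v_1,\dots,v_{i+1}$) or a neighbor of $v_{i+1}$ (namely the rest of $\st_{G_i}(v_{i+1})$, including $v$ itself), so the single edge $(v,v_{i+1})$ re-dominates all of them at once; your isolated-vertex bookkeeping with auxiliary edges $(u,w_u)$ then carries over verbatim and gives condition (2) at every stage. For condition (3), $v$ becomes isolated in $G_k$, every $u\in\is(G_k)$ has $N_G(u)\subseteq N(v)$, and the edge $(v,w_v)$ chosen for $u=v$ covers all of $N(v)$, yielding $\epsilon(G)\leq\epsilon(\overline{G_k})+|\is(G_k)|$. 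With that one change of deletion sequence, your $i=0$ computation is essentially the entire proof.
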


\begin{proof}
We claim that the function $\ep(G)$ satisfies the conditions of Theorem \ref{ME}. Let $G$ be any graph with at least one edge. Pick any vertex $v$ such that $ N(v) = \{v_1,\cdots, v_n \} $ is non-empty.  For any $i$, if we let $E_i$ be an edgewise-dominant set of $G_i -\st_{G_i}v_{i+1}$ then clearly $E_i \cup (v , v_i)$ dominates $G$. So 
$$\ep(G_i -\st_{G_i}v_{i+1}) +1 \geq \ep(G) $$
which implies condition (2). By the same reasoning $\ep(G_n)+1 \geq \ep(G)$. But $v \in \is(G_n)$, so $\is(G_n) \geq 1$, and thus condition (3) is also satisfied.
\end{proof}

We can also prove a similar bound involving $\tau(G)$.  While most of our proofs bounding projective dimension use Theorem \ref{ME} or Theorem \ref{secondmeta}, the following theorem is best proved using only Lemma \ref{pdinduct}.

\begin{theorem}\label{pdind}
Let $G$ be a graph without isolated vertices.  Then
\[
\pd(G) \leq n -\tau(G),
\]
where $n$ is the number of vertices of $G$. 
\end{theorem}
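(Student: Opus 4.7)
The plan is to induct on $n = |V(G)|$ and apply Lemma~\ref{pdinduct} at a carefully chosen vertex. The base case $n = 0$ is vacuous, so assume $n \geq 1$; since $G$ has no isolated vertex, it has at least one edge, and therefore every independent set of $G$ is properly contained in $V(G)$. Let $A \subseteq V(G)$ be an independent set witnessing $\tau(G) = \gamma_0(A, G)$, and pick any vertex $x \in V(G) \setminus A$. Lemma~\ref{pdinduct} applied at $x$ reduces the problem to bounding $\pd(G - \st x) + \deg(x)$ and $\pd(G - x) + 1$ by $n - \tau(G)$.

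To each smaller graph I apply the inductive hypothesis after stripping isolated vertices (using $\pd(G') = \pd(\overline{G'})$). A short calculation reduces everything to the two domination-parameter inequalities
\begin{align*}
\tau(G) &\leq \tau(\overline{G - \st x}) + |\is(G - \st x)| + 1, \\
\tau(G) &\leq \tau(\overline{G - x}) + |\is(G - x)|.
\end{align*}
I would prove each by exhibiting an explicit set dominating $A$ in $G$. For the first, let $I = \is(G - \st x)$ and $A^{*} = A \setminus (\st x \cup I)$; start with a minimum set $D$ dominating $A^{*}$ in $\overline{G - \st x}$, adjoin $x$ (which dominates every vertex of $A \cap N(x)$), and for each $a \in A \cap I$ adjoin one of its $G$-neighbors. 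Because $x \notin A$, each $a \in A \cap I$ satisfies $N_G(a) \subseteq N(x)$, so those extra vertices all come from $N(x)$ and number at most $|A \cap I| \leq |I|$. The second inequality is analogous but cleaner: every $a \in A \cap \is(G - x)$ satisfies $N_G(a) = \{x\}$, so a single vertex $x$ dominates all such $a$ simultaneously.

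The main obstacle I foresee is the bookkeeping around newly isolated vertices. A naive attempt at the second inequality yields only $\tau(G) \leq \tau(\overline{G - x}) + 1$, which is too weak when $\is(G - x) = \emptyset$. The resolution is precisely the initial restriction $x \notin A$: with this choice, the extra ``$+1$'' dominator $x$ is invoked only when $A$ actually meets $\is(G - x)$, and in that case $|\is(G - x)| \geq 1$ absorbs the discrepancy; when $A \cap \is(G - x) = \emptyset$, the set $D'$ already dominates $A$ with no need for $x$ at all. The interplay between selecting $x$ outside $A$ and the isolated-vertex accounting is the delicate point that makes Lemma~\ref{pdinduct} close cleanly without invoking the more elaborate machinery of Theorem~\ref{ME}.
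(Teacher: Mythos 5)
Your proposal is correct and follows essentially the same route as the paper: induction on $|V(G)|$ via Lemma \ref{pdinduct} applied at a vertex outside the witnessing independent set $A$, with exactly the isolated-vertex bookkeeping you describe. The only (harmless) organizational difference is that the paper takes its pivot vertex from a minimum set dominating $A$ and argues by cases on which branch of the lemma attains the maximum, whereas you take an arbitrary $x \notin A$ and bound both branches uniformly via your two displayed domination inequalities, both of which check out.
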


\begin{proof}
Let $A \subseteq V(G)$ be an independent set witnessing $\tau(G)$.  That is, $\gamma_0(A, G) = \tau(G)$.  Let $X \subseteq V(G)$ be such that $A \subseteq N(X)$ and $|X| = \gamma_0(A, G)$.  We use the two cases of Lemma \ref{pdinduct}.  Pick $v \in X$.  Then $v \notin A$, since $A$ is independent.  First, suppose that $\pd(G) \leq \pd(\overline{G - \st v}) + \deg v$, and let $B \subseteq A$ be the vertices of $A$ that are isolated by the removal of $\st v$ from $G$.  Let $Y$ be a set of vertices of $\overline{G- \st v}$ realizing $\gamma_0(A - B, \overline{G - \st v})$ (that is, $A - B \subseteq N(Y)$ and $|Y| = \gamma_0(A - B, \overline{G - \st v})$).  Now choose a neighbor in $G$ of each $b \in B$, and let $Z$ be the set of all these neighbors (so $|Z| \leq |B|$).  Then $A \subseteq N(Y \cup Z \cup v)$, so 
\[
\gamma_0(A - B, \overline{G - \st v}) = |Y| \geq \gamma_0(A, G) - |Z| - 1 \geq \gamma_0(A, G) - |B| - 1.
\]
Thus, by induction on the number of vertices of $G$ (the base case with two vertices and one edge being trivial), we have
\begin{align*}
\pd(G) &= \pd(G - \st v) + \deg v \\ 
&\leq (n - |B| - 1) - \tau(\overline{G - \st v}) \\ 
&\leq (n - |B| - 1) - \gamma_0(A - B, \overline{G - \st v}) \\
&\leq (n - |B| - 1) - (\gamma_0(A, G) - |B| - 1) = n - \tau(G).  
\end{align*}

Thus, we can assume that $\pd(G) \leq \pd(G - v) + 1$.  We examine two subcases.  First, suppose that no vertices of $A$ are isolated by the removal of $v$ from $G$ (that is, $A$ is a subset of the vertices of $\overline{G -v}$).  Then any subset $Y$ of vertices of $\overline{G - v}$ which strongly dominates  $A$ in $\overline{G - v}$ also strongly dominates $A$ in $G$, so $\tau(G) = \gamma_0(A, G) \leq \gamma_0(A, \overline{G - v}) \leq \tau(\overline{G - v})$, and we have $\pd(G) \leq \pd( \overline{G - v}) + 1 \leq (n - 1) - \tau(\overline{G - v}) + 1 \leq n - \tau(G)$.  

Finally, suppose some vertices of $A$ are isolated in $G - v$, and let $B\subseteq A$ be all such vertices.  We claim that $\gamma_0(A - B, \overline{G - v}) \geq \gamma_0(A, G) - 1$  (and thus $\tau(\overline{G - v}) \geq \tau(G) - 1$).  Indeed, suppose this were not the case.  Then there would be some subset $Y$ of the vertices of $\overline{G - v}$ such that $|Y| < \gamma_0(A, G) - 1$ and $A - B \subseteq N(Y)$.  But then, since the vertices of $B$ are isolated by removing $v$ from $G$, we have $B \subseteq N(v)$, and so $A \subseteq N(Y \cup v)$, contradicting our choice of $X$ since $|Y \cup v| < |X|$.   Thus, 
\[
\pd(G) \leq \pd(\overline{G - v}) + 1 \leq (n - |B| - 1) - \tau(\overline{G - v}) - 1 \leq (n - 2) - (\tau(G) - 1) + 1) = n - \tau(G).
\]
\end{proof}

Recall that $X \subseteq V(G)$ is a \emph{vertex cover} of $G$ if every edge of $G$ contains at least one vertex of $X$.  As demonstrated by Theorem \ref{edgedom}, dominating sets naturally arise in the study of the projective dimensions of edge ideals.  Before going further, we need two well-known observations and a standard proposition, whose short proofs we include for completeness.  

\begin{observation}\label{bh}
The maximum size of a minimal vertex cover of $G$ equals $\bh(I(G))$.
\end{observation}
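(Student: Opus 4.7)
The plan is to exploit the standard correspondence between minimal vertex covers of $G$ and minimal primes of the edge ideal $I(G)$. Since $I(G)$ is a squarefree monomial ideal, it is radical, so every associated prime is a minimal prime, and every such minimal prime is generated by a subset of the variables $\{x_1, \ldots, x_n\}$. Consequently, $\bh(I(G))$ is the maximum height appearing among the minimal primes of $I(G)$.

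Next, I would identify precisely which variable-generated primes are the minimal primes of $I(G)$. For a subset $X \subseteq V(G)$, let $P_X = (x : x \in X)$. Then $P_X \supseteq I(G)$ if and only if every generator $x_i x_j$ of $I(G)$ lies in $P_X$, which happens exactly when every edge of $G$ has at least one endpoint in $X$ — that is, when $X$ is a vertex cover. Inclusions $P_X \subseteq P_{X'}$ correspond to containments $X \subseteq X'$, so $P_X$ is a minimal prime of $I(G)$ precisely when $X$ is a minimal vertex cover. Since $\height(P_X) = |X|$, we conclude
\[
\bh(I(G)) \;=\; \max\{\height(P_X) : X \text{ a minimal vertex cover}\} \;=\; \max\{|X| : X \text{ a minimal vertex cover}\},
\]
which is the desired equality.

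There is no real obstacle: the argument rests on well-known facts about squarefree monomial ideals (namely, that they are radical and that their minimal primes are generated by subsets of variables) together with the elementary translation between the divisibility conditions defining membership in $P_X$ and the combinatorial condition of being a vertex cover.
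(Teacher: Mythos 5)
Your proof is correct and rests on the same idea the paper uses: the correspondence between (minimal) vertex covers of $G$ and the variable-generated (minimal) primes of the squarefree ideal $I(G)$, with $\height(P_X)=|X|$. In fact your write-up is more complete than the paper's one-line proof, which simply asserts that a maximum-size minimal vertex cover gives an associated prime $P$ and then records the localization inequality $\pd(S/I)\geq\height(P)$ that is really what gets used later in Proposition \ref{pdlowbound}.
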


\begin{proof}
Let $P$ be a minimal vertex cover with maximal cardinality. Then $P$ is an associated prime of $S/I$, so 
\[
\pd(S/I) \geq \pd_{S_P}(S/I)_P = \dim S_P = \height(P).
\]
\end{proof}

\begin{observation}\label{vertexcover}
A subset $X \subseteq V(G)$ is a vertex cover if and only if $V(G)\setminus X$ is independent.  Moreover, $V(G)\setminus X$ is dominating if and only if $X$ is minimal.
\end{observation}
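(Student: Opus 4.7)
The plan is to argue both equivalences directly from the definitions; no machinery beyond the translation between ``meets every edge'' and ``contains no edge'' is needed. First I would handle the statement ``$X$ is a vertex cover iff $V(G)\setminus X$ is independent.'' An edge $(u,v)$ has both endpoints in $V(G)\setminus X$ precisely when neither endpoint lies in $X$. Thus $X$ meets every edge exactly when no edge lies wholly in $V(G)\setminus X$, which is the defining condition of independence. A short contrapositive in each direction closes this part.

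For the ``minimal iff complement dominates'' claim, I would first rephrase minimality of a vertex cover $X$ in a purely local way: $X$ is minimal if and only if, for every $x\in X$, the set $X\setminus\{x\}$ fails to be a vertex cover. Such a failure requires an edge $e$ with both endpoints outside $X\setminus\{x\}$. Because $X$ itself is already a cover, $e$ must be incident to $x$, and its other endpoint $y$ must satisfy $y\neq x$ and $y\notin X\setminus\{x\}$, forcing $y\in V(G)\setminus X$. Conversely, any edge from $x\in X$ to some $y\in V(G)\setminus X$ witnesses the failure of $X\setminus\{x\}$ to cover. Hence minimality of $X$ is equivalent to the condition that every $x\in X$ has at least one neighbor in $V(G)\setminus X$. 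On the domination side, $V(G)\setminus X$ is dominating exactly when every vertex outside it---that is, every $x\in X$---has a neighbor in $V(G)\setminus X$, which is the very same condition.

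The only point requiring a moment's care is the identification of the ``breaking edge'' in the minimality argument: one must observe that this edge necessarily crosses between $x$ and $V(G)\setminus X$, using both that $X$ is already a cover (ruling out edges inside $V(G)\setminus X$) and that exactly one vertex is being removed (so the other endpoint cannot be an element of $X$ distinct from $x$). Once this bookkeeping is in place, the proof is essentially a chain of trivial equivalences, so I do not anticipate any genuine obstacle.
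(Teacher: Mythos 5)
Your proof is correct and takes essentially the same route as the paper: both parts reduce to the observation that an edge lies in $V(G)\setminus X$ exactly when it misses $X$, and that minimality of the cover $X$ is equivalent to every $x\in X$ having a neighbor outside $X$ (the paper phrases the second part contrapositively, deducing $N(v)\subseteq X$ when $X-v$ is still a cover, but this is the same local condition you identify). No gaps.
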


\begin{proof}
The first claim is immediate, since $G[V(G)\setminus X]$ contains no edges if and only if every edge of $G$ contains some vertex of $X$.  For the second claim, suppose $X$ is not minimal, meaning $X - v$ is a vertex cover for some $v \in X$.  Then $V(G) \setminus (X - v)$ would be independent, meaning $N(v) \subseteq X$.  But then $\st(v) \subseteq X$, and $V(G) \setminus X$ is not dominating.  Reversing this argument proves the converse.  
\end{proof}

\begin{proposition}\label{pdlowbound}
Let $G$ be a graph on $n$ vertices.  Then 
\[
\pd(G) \geq n - i(G).
\]
\end{proposition}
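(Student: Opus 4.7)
The plan is to combine the two preceding observations to reduce the inequality to a statement about minimal vertex covers. First I would take $A \subseteq V(G)$ to be an independent dominating set of minimum cardinality, so $|A| = i(G)$. By Observation \ref{vertexcover}, the complement $X = V(G) \setminus A$ is a minimal vertex cover of $G$, and clearly $|X| = n - i(G)$.

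Next I would apply Observation \ref{bh}, which gives
\[
\bh(I(G)) \ \geq \ |X| \ = \ n - i(G).
\]
Finally, the proof of Observation \ref{bh} itself already shows that $\pd(G) \geq \bh(I(G))$ (by localizing at the associated prime corresponding to a maximum-cardinality minimal vertex cover and using that projective dimension does not decrease under localization, together with the Auslander–Buchsbaum-style bound $\pd_{S_P}(S/I)_P = \height(P)$ for $P$ a minimal prime of maximal height). Chaining these inequalities yields $\pd(G) \geq n - i(G)$, which is the desired bound.

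There is essentially no obstacle here: the content is packaged entirely in the two observations, and the argument is just the observation that minimizing over independent dominating sets is the same, via complementation, as maximizing over minimal vertex covers. The only thing to be careful about is making the link between $\bh(I(G))$ and $\pd(G)$ explicit, since the statement of Observation \ref{bh} identifies $\bh(I(G))$ with the maximum size of a minimal vertex cover, while its proof is what actually gives the inequality $\pd(G) \geq \bh(I(G))$ that we need.
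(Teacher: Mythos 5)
Your proposal is correct and follows essentially the same route as the paper: complement a minimum independent dominating set to get a minimal vertex cover of size $n - i(G)$ via Observation \ref{vertexcover}, then pass through $\bh(I(G))$ and the inequality $\pd(G) \geq \bh(I(G))$ from Observation \ref{bh}. Your remark that the needed inequality $\pd(G) \geq \bh(I(G))$ lives in the proof of Observation \ref{bh} rather than its statement is a fair and accurate point of care, but it does not change the argument.
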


\begin{proof}
Since $i(G)$ is the smallest size of an independent dominating set, $n - i(G)$ is the maximum size of a minimal vertex cover, meaning $n - i(G) = \bh(I(G))$, by Observation \ref{bh}.  Since $\pd(I(G)) \geq \bh(I(G))$, Observation \ref{vertexcover} applies.
\end{proof}

In \cite{alh}, the authors show the following.

\begin{theorem}[\cite{alh}]
Let $G$ be a graph on $n$ vertices.  Then $i(G) + \gamma_0(G) \leq n$.
\end{theorem}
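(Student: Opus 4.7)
The plan is to exhibit a set $A \subseteq V(G)$ with every vertex of $G$ adjacent to some vertex of $A$ and $|A| \leq n - i(G)$, which directly yields $\gamma_0(G) \leq n - i(G)$. Let $I$ be a minimum independent dominating set, so $|I| = i(G)$, and set $U = V(G) \setminus I$, so $|U| = n - i(G)$. Because $\gamma_0(G)$ is finite only when $G$ has no isolated vertices, and because $I$ is independent, every $v \in I$ already has at least one neighbor in $U$. Thus $U$ automatically provides each vertex of $I$ with a neighbor, and the only candidate for $A$ to focus on is a subset ``close to'' $U$.

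The only potential obstruction to $U$ itself being totally dominating is the set of ``bad'' vertices
\[
U_0 = \{u \in U : N(u) \subseteq I\},
\]
whose members have no neighbor in $U$. If $U_0 = \emptyset$, take $A = U$ and we are done. Otherwise, I would replace the vertices of $U_0$ inside $U$ by a combination of $I$-vertices and selected $U_0$-vertices; explicitly, set
\[
A = (U \setminus U_0) \cup T_I \cup T_U,
\]
where $T_I \subseteq I$ is chosen so that every $u \in U_0$ has a neighbor in $T_I$, and $T_U \subseteq U_0$ is chosen so that every ``hidden'' $I$-vertex (one whose neighborhood lies entirely inside $U_0$) has a neighbor in $T_U$. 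A direct check then confirms that each vertex of $V(G)$ is adjacent to some vertex of $A$: vertices of $U \setminus U_0$ still have a neighbor in $U \setminus U_0$, non-hidden vertices of $I$ retain a neighbor in $U \setminus U_0$, hidden vertices of $I$ are covered by $T_U$, and vertices of $U_0$ are covered by $T_I$.

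The crucial step---and the main obstacle---is to show that $T_I$ and $T_U$ can be chosen so that $|T_I| + |T_U| \leq |U_0|$, which yields $|A| \leq |U| = n - i(G)$. The minimality of $I$ must enter here: if no such choice were possible, one should be able to construct a strictly smaller independent dominating set by deleting the hidden vertices from $I$ and inserting $T_U$ together with those vertices of $U_0$ whose only $I$-neighbors already lie in the hidden set, contradicting $|I| = i(G)$. Turning this reduction into a rigorous argument will require a Hall-type matching argument in the bipartite graph with parts $I$ and $U_0$, along with careful verification that the modified set is both independent and dominating. This bipartite/minimality interplay is where I expect most of the technical work to lie.
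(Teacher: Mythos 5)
A point of reference first: the paper does not prove this statement at all --- it is quoted from Allan--Laskar--Hedetniemi \cite{alh} --- so there is no internal proof to compare yours against, and your proposal must stand on its own. As written it does not. The entire content of the theorem is concentrated in the inequality $|T_I| + |T_U| \leq |U_0|$, which you explicitly defer (``this is where I expect most of the technical work to lie''). Everything before that point --- taking $U = V(G)\setminus I$ for a minimum independent dominating set $I$, observing that $U$ totally dominates $I$ because $I$ is independent and $G$ has no isolated vertices, and isolating the bad set $U_0$ of vertices with no neighbor in $U$ --- is sensible and standard bookkeeping, but the Hall-type matching argument you gesture at \emph{is} the theorem, and it is missing.

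Worse, the missing step cannot be carried out in the generality you have set up, because the statement as literally written is false. Take $G = K_2$ (or any graph with a $K_2$ component): then $i(G) = 1$ while $\gamma_0(G) = 2$, since each endpoint must have a neighbor inside the dominating set, so $i(G) + \gamma_0(G) = 3 > 2 = n$. In your notation $I = \{a\}$, $U = U_0 = \{b\}$, the unique vertex of $I$ is ``hidden,'' and you are forced to take $T_I = \{a\}$ and $T_U = \{b\}$, giving $|T_I| + |T_U| = 2 > 1 = |U_0|$. So the crucial inequality fails exactly where your sketch stops. The intended theorem carries an additional hypothesis (beyond the absence of isolated vertices needed for $\gamma_0$ to be defined) ruling out $K_2$-like components, and any honest completion of your argument must identify where that hypothesis enters the matching/exchange step --- presumably in guaranteeing that a hidden vertex of $I$ and a vertex of $U_0$ covering it can be made to do double duty. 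Your proposal gives no indication of this, so it should be regarded as a plausible opening move rather than a proof.
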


Proposition \ref{pdlowbound} then yields the next corollary.

\begin{corollary}
For any graph $G$ without isolated vertices, we have
\[
\pd(G) \geq \gamma_0(G).
\]
\end{corollary}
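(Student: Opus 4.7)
The plan is to combine the two results that immediately precede the corollary: Proposition \ref{pdlowbound}, which states $\pd(G) \geq n - i(G)$, and the cited theorem of \cite{alh}, which states $i(G) + \gamma_0(G) \leq n$. Chaining these two inequalities gives
\[
\pd(G) \geq n - i(G) \geq n - (n - \gamma_0(G)) = \gamma_0(G),
\]
which is exactly the desired bound. So the corollary is essentially a one-line consequence once both ingredients are in hand.

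The only subtle point I would check is the hypothesis that $G$ has no isolated vertices. This is needed for both inputs to make sense in the intended way: an isolated vertex contributes nothing to $\gamma_0(G)$ as defined (it cannot be adjacent to any vertex of a dominating set), so in the presence of isolated vertices one must be careful about whether $\gamma_0(G)$ is even finite. Under the hypothesis, every vertex has a neighbor, so any choice of one neighbor per vertex yields a finite set witnessing $\gamma_0(G)$, and the inequality $i(G) + \gamma_0(G) \leq n$ from \cite{alh} applies directly.

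I do not expect any obstacle in the argument beyond noting these hypotheses; no new combinatorial construction or inductive argument is needed. The proof will simply cite Proposition \ref{pdlowbound} and the theorem from \cite{alh}, then display the two-step inequality above.
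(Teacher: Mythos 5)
Your proof is correct and is exactly the argument the paper intends: the corollary is stated as an immediate consequence of Proposition \ref{pdlowbound} combined with the inequality $i(G) + \gamma_0(G) \leq n$ from \cite{alh}. Your remark about the role of the no-isolated-vertices hypothesis (ensuring $\gamma_0(G)$ is well-defined) is a sensible sanity check and consistent with the paper's setup.
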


\section{Chordal and Long Graphs}\label{chordalsection}

We prove in this section strong bounds and even an exact formula on projective dimension for any hereditary class of graphs satisfying a certain dominating set condition.  We then observe that chordal graphs satisfy this property, obtaining an equality (Corollary \ref{chordalcorollary}) for $\pd(G)$ when $G$ is chordal. We also show a result of independent interest: for a graph such that all vertices of degree at least three are independent, the domination and independent domination number coincide.  Some of our results  were inspired by those in \cite{abz} and \cite{kimura}.

\begin{theorem}\label{largegamma}
Let $f(G)$ be either $\gamma(G)$ or $i(G)$. Suppose $\mathcal{G}$ is a hereditary class of graphs such that whenever $G \in \mathcal{G}$ has at least one edge, there is some vertex $v$ of $G$ with $f(G) \leq f(G - v)$.  Then for any $G \in \mathcal{G}$ with $n$ vertices,
\[
\pd(G) \leq n - \gamma(G).
\]
If $f(G)$ is $i(G)$, then for any $G \in \mathcal G$ with $n$ vertices, we have the equality
\[
\pd(G) = n - i(G) = \bh(I(G)), 
\]
and $S/I(G)$ is Cohen-Macaulay if and only if it is unmixed. 
\end{theorem}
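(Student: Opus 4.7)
The plan is to invoke Theorem \ref{ME} with the function $f$ itself (either $\gamma$ or $i$) and a singleton sequence: take $k = 1$ and $v_1 := v$, where $v$ is a vertex witnessing the hypothesis $f(G) \leq f(G - v)$. Axiom (1) of Theorem \ref{ME} is immediate, since every (independent) dominating set of an edgeless graph must contain all of $V(G)$. For Axiom (2), it suffices to show $f(G - \st v) + 1 \geq f(G)$: given a dominating set $D$ of $G - \st v$, the set $D \cup \{v\}$ dominates $G$, and when $f = i$ this union stays independent because $D \subseteq V(G) \setminus \st v$ contains no neighbor of $v$.

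For Axiom (3), observe that $G_1 = G - v$ and that, for both $\gamma$ and $i$, isolated vertices contribute additively: any (independent) dominating set of $G - v$ must contain each of its isolated vertices, so $f(G - v) = f(\overline{G - v}) + |\is(G - v)|$. Axiom (3) therefore simplifies to $f(G - v) \geq f(G)$, which is precisely the standing hypothesis. Applying Theorem \ref{ME} yields $\pd(G) \leq n - f(G)$, which is the first claim when $f = \gamma$. When $f = i$ the bound sharpens to $\pd(G) \leq n - i(G)$, and Proposition \ref{pdlowbound} supplies the matching lower bound $\pd(G) \geq n - i(G)$, forcing equality. The identification $n - i(G) = \bh(I(G))$ follows from Observations \ref{bh} and \ref{vertexcover}: minimum independent dominating sets are exactly the complements of maximum minimal vertex covers.

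For the Cohen-Macaulay equivalence, combine the Auslander--Buchsbaum formula $\pd(S/I(G)) + \mathrm{depth}(S/I(G)) = n$ with $\dim(S/I(G)) = n - \height(I(G))$: Cohen-Macaulayness is equivalent to $\pd(G) = \height(I(G))$, while unmixedness is equivalent to $\bh(I(G)) = \height(I(G))$. Since we have just shown $\pd(G) = \bh(I(G))$, the two conditions coincide. The principal subtlety in the entire argument lies in Axiom (2) when $f = i$, where removing the full closed neighborhood $\st v$ (rather than only $v$) is exactly what ensures that adjoining $v$ back preserves independence; every other verification is essentially a one-line deduction from the definitions.
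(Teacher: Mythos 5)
Your proposal is correct and follows essentially the same route as the paper: both verify the axioms of Theorem \ref{ME} with $k=1$ and $v_1 = v$, noting that adjoining $v$ to an (independent) dominating set of $G - \st v$ handles axiom (2), that isolated vertices force $f(G-v) = f(\overline{G-v}) + |\is(G-v)|$ for axiom (3), and then invoke Proposition \ref{pdlowbound} for the reverse inequality when $f = i$. Your added detail on the independence of $D \cup \{v\}$ and the Auslander--Buchsbaum justification of the Cohen--Macaulay/unmixed equivalence merely makes explicit what the paper leaves implicit.
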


\begin{proof}
We just need to check the condition of Theorem \ref{ME} is satisfied for $f(G)$. Set $k = 1$ and let $v_1$ be a vertex of $G$ with $f(G) \leq f(G - v_1)$. Note that $f(G) \leq f(G - \st v_1) + 1$, since  $X \cup \{v_1\}$ is a (independent) dominating set of $G$ whenever $X$ is a (independent) dominating set of $G - \st v_1$.  Writing $G_1$ for $G - v_1$ (as in Theorem \ref{ME}), note that by definition $\is(G_1)$ is contained in any dominating  or independent dominating set of $G_1$, meaning $f(\overline{G_1}) + \is(G_1) = f(G_1)$.  So, we have
\[
f(\overline{G_1}) + \is(G_1) = f(G_1) = f(G - v_1) \geq f(G).
\]
Thus, $\pd(G) \leq n - f(G)$.  In the case when $f(G) = i(G)$, Proposition \ref{pdlowbound} gives equality.  For the last claim, note that $S/I(G)$ is unmixed if and only if $\text{height}(I(G)) = \bh(I(G))$. 
\end{proof}

\begin{remark}
{\rm
We note that the contrapositive of Theorem \ref{largegamma} may prove interesting.  Indeed, if an $n$-vertex graph $G$ fails to satisfy the bound $\pd(G) \leq n - f(G)$ (where $f(G)$ is either $\gamma(G)$ or $i(G)$), then $G$ must contain an induced subgraph $G'$ such that $f(G' - v) = f(G) - 1$ for all $v\in V(G')$ (it is easy to see that $f$ can decrease by at most $1$ upon the removal of a vertex).  That is, $G'$ is a so-called \emph{domination-critical graph}.  Such graphs have been studied at length; see \cite{domcrit} and references given therein.
}
\end{remark}

\begin{corollary}
Let $\mathcal{G}$ be as in Theorem \ref{largegamma}, choose $G \in \mathcal{G}$ with $n$ vertices, and let $d$ be the greatest degree occurring in $G$.  Then 
\[
\pd(G) \leq n\left( 1 - \frac{1}{d+1}\right).
\]
\end{corollary}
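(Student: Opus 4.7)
The plan is to combine Theorem \ref{largegamma} with the classical lower bound $\gamma(G) \geq \frac{n}{d+1}$ for the domination number in terms of the maximum degree. By the theorem, we already know that $\pd(G) \leq n - \gamma(G)$ for any $G \in \mathcal{G}$, so it suffices to show that the domination number is at least $\frac{n}{d+1}$.

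To establish this lower bound, I would fix a minimum dominating set $A \subseteq V(G)$, so that $|A| = \gamma(G)$. By definition of dominating set, every vertex of $G$ is either contained in $A$ or is adjacent to some vertex of $A$, which means $V(G) = A \cup N(A)$. Since each vertex of $A$ contributes at most $d$ neighbors, we get
\[
n = |V(G)| \leq |A| + |N(A)| \leq |A| + d\,|A| = (d+1)\gamma(G),
\]
which rearranges to $\gamma(G) \geq \frac{n}{d+1}$.

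Chaining the two inequalities gives
\[
\pd(G) \leq n - \gamma(G) \leq n - \frac{n}{d+1} = n\left(1 - \frac{1}{d+1}\right),
\]
which is the desired bound. No real obstacle arises here — the domination-number inequality is elementary, and the work has already been done in Theorem \ref{largegamma}; this corollary is essentially a packaging of those results into a bound that depends only on the maximum degree rather than on the more refined invariant $\gamma(G)$.
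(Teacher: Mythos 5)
Your proposal is correct and follows essentially the same argument as the paper: both invoke the bound $\pd(G) \leq n - \gamma(G)$ from Theorem \ref{largegamma} and then derive $\gamma(G) \geq \frac{n}{d+1}$ by counting that a dominating set $A$ together with its at most $d|A|$ neighbors must cover all $n$ vertices. No discrepancies to report.
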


\begin{proof}
For any dominating set $A \subseteq V(G)$, note that at most $d|A|$ vertices are adjacent to vertices of $A$.  Since every vertex in $V \setminus A$ is adjacent to some vertex of $A$, it follows that $G$ has at most $d|A| + |A| = |A|(d+1)$ vertices.  Thus, we can write $n \leq (d+1)\gamma(G) \Rightarrow \gamma(G) \geq \frac{n}{d+1}$.  By Theorem \ref{largegamma}, 
\[
\pd(G) \leq n - \gamma(G) \leq n - \frac{n}{d+1} = n\left(1 - \frac{1}{d+1}\right).
\]
\end{proof}


\begin{lemma}\label{delete}
Suppose $N(v) -w \subseteq N(w) $ and $(v,w)$ is an edge.  Then $\gamma(G) \leq \gamma(G - w)$ and $i(G) \leq i(G-w)$. 
\end{lemma}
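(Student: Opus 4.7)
The plan is to start from a minimum (independent) dominating set $D$ of $G-w$ and verify that the same set $D$, viewed inside $G$, is still a (independent) dominating set of $G$. Since $|D| = \gamma(G-w)$ (respectively $i(G-w)$), this will yield both inequalities simultaneously.

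The only thing that could possibly go wrong when passing from $G-w$ to $G$ is that the vertex $w$ itself might not be dominated by $D$. This is where I would use the hypothesis $N(v)-w \subseteq N(w)$. I would split into two cases according to whether $v \in D$ or not. If $v \in D$, then since $(v,w)$ is an edge, $v$ dominates $w$ in $G$, and we are done. If $v \notin D$, then because $D$ dominates $G-w$, some $u \in D$ must satisfy $u \in N_{G-w}(v) = N(v)-w$. By hypothesis, this forces $u \in N(w)$, so $u$ dominates $w$ in $G$. (The degenerate subcase $N(v)-w = \emptyset$ is consistent: then $v$ is isolated in $G-w$, hence necessarily belongs to $D$, reducing to the first case.) Every other vertex of $G$ lies in $V(G-w)$ and is dominated by $D$ in $G-w$, hence also in $G$. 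Thus $D$ dominates $G$, yielding $\gamma(G) \leq |D| = \gamma(G-w)$.

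For the independence statement, I would additionally observe that independence of $D$ is inherited for free: the edge sets of $G$ and $G-w$ differ only in edges incident to $w$, and $w \notin D$, so any pair of vertices of $D$ has the same adjacency status in both graphs. Thus an independent dominating set of $G-w$ remains an independent dominating set of $G$, giving $i(G) \leq i(G-w)$.

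There is no real obstacle here; the content of the lemma is almost entirely contained in the single observation that the hypothesis $N(v)-w \subseteq N(w)$ lets any dominator of $v$ (in $G-w$) double as a dominator of $w$ (in $G$). The rest is bookkeeping and a brief check that independence is not destroyed when we re-introduce $w$ as a non-member of the set.
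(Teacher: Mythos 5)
Your proof is correct and follows essentially the same route as the paper's: take a minimum (independent) dominating set of $G-w$, observe that the only vertex whose domination is in question is $w$ itself, and use the case split on whether $v$ lies in the set together with the hypothesis $N(v)-w \subseteq N(w)$ to conclude. Your treatment is slightly more careful than the paper's (the degenerate subcase and the explicit check that independence is preserved), but the underlying argument is identical.
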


\begin{proof}
Let $X$ be a (independent) dominating set of $G - w$; It suffices to show that $X$ dominates $G$.  Since $v \in G - w$, $X$ either contains $v$ or a neighbor of $v$.  In the first case, since $(v, w)$ is an edge of $G$, $X$ still dominates $G$.  For the second case, note that any neighbor of $v$ is a neighbor of $w$ (since $N(v) - w \subseteq N(w) $), and thus $X$ must be a (independent) dominating set of $G$ as well. 
\end{proof}

\begin{theorem}[Dirac]\label{dirac}
Let $G$ be a chordal graph with at least one edge.  Then there exists a vertex $v$ of $G$ so that $N(v) \neq \emptyset $ and $G[N(v)]$ is complete.  
\end{theorem}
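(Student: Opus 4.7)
The plan is first to reduce to the slightly stronger classical statement that every chordal graph with at least two vertices that is not itself complete contains two non-adjacent \emph{simplicial} vertices, where a vertex $v$ is called simplicial if $G[N(v)]$ is complete. Granting this, the theorem drops out: since $G$ has an edge it has a connected component $H$ with $|V(H)| \geq 2$; if $H$ is complete, any vertex of $H$ has the desired property, while otherwise the stronger statement applied to $H$ yields a simplicial vertex, which cannot be isolated since $H$ is connected of order at least two. Because $H$ is a component, $N_G(v) = N_H(v)$, so simpliciality transfers from $H$ to $G$.

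The real work is the classical lemma that, in a chordal graph, every minimal vertex separator $S$ induces a clique. I would argue by contradiction: suppose $a,b \in S$ are non-adjacent, and choose two distinct components $A$ and $B$ of $G-S$. Minimality of $S$ forces each of $a$ and $b$ to have a neighbor in both $A$ and $B$, so one can pick a shortest path from $a$ to $b$ whose interior lies in $A$, and another with interior in $B$, and concatenate them into a cycle of length at least four. By chordality this cycle must carry a chord, but any chord within one of the two half-paths would contradict the shortest-path choice, any chord between the two interiors would contradict the fact that $S$ separates $A$ from $B$ in $G$, and the chord $(a,b)$ was assumed absent.

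With the separator lemma in hand, the strengthened statement follows by induction on $|V(G)|$; graphs with at most two vertices are trivial. In the inductive step, if $G$ is complete there is nothing to prove, so assume not and pick any minimal separator $S$, which is a clique by the lemma. Choose two distinct components $C_1, C_2$ of $G-S$ and set $G_i = G[C_i \cup S]$, again chordal. The inductive hypothesis applied to each $G_i$ (handling separately the cases in which $G_i$ is or is not complete) produces a simplicial vertex $v_i$ of $G_i$ lying in $C_i$ rather than in $S$, since at most one of the two guaranteed non-adjacent simplicial vertices can sit inside the clique $S$. Because $C_i$ is a component of $G-S$, the neighborhood of $v_i$ in $G$ agrees with its neighborhood in $G_i$, so $v_i$ remains simplicial in $G$. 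The vertices $v_1$ and $v_2$ lie in different components of $G-S$ and are therefore non-adjacent, closing the induction.

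The main obstacle will be the cycle-and-chord argument underlying the separator lemma; in particular, one must choose the two half-paths carefully enough that their only common vertices are $a$ and $b$, and then catalog the possible chord positions. Everything else is routine bookkeeping, and the reduction from ``a simplicial vertex exists'' to ``a non-isolated simplicial vertex exists'' is automatic once one restricts attention to a connected component containing an edge.
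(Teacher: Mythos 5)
The paper offers no proof of Theorem \ref{dirac}: it is quoted as a classical result of Dirac and used as a black box (its only role is to feed Lemma \ref{delete} in the proof of Corollary \ref{chordalcorollary}). So there is nothing to compare against; what matters is whether your argument stands on its own, and it does. Your route --- reduce to the statement that a non-complete chordal graph has two non-adjacent simplicial vertices, prove that minimal separators induce cliques via the two-half-paths-and-a-forbidden-chord argument, then induct by splitting $G$ along a minimal separator $S$ into $G[C_i \cup S]$ and using that a clique $S$ can absorb at most one of the two guaranteed non-adjacent simplicial vertices --- is exactly the standard textbook proof, and every step checks out. One point worth making explicit when you write it up: you take $S$ to be an inclusion-minimal separator and assert that \emph{every} vertex of $S$ has a neighbor in \emph{each} of two chosen components $A$ and $B$; this is true, but the clean way to see it is to fix $u \in A$, $v \in B$, observe that inclusion-minimality of $S$ forces $S$ to be a minimal $u$--$v$ separator, and then run the usual argument that deleting a vertex of $S$ with no neighbor in $A$ would leave $u$ and $v$ still separated. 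Also make sure the disconnected case is covered (there $S = \emptyset$ works vacuously), and note that your final reduction to a \emph{non-isolated} simplicial vertex, via restriction to a component containing an edge, is indeed needed because the theorem as stated in the paper demands $N(v) \neq \emptyset$.
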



Using Dirac's Theorem and Theorem \ref{largegamma}, we can recover a formula for the projective dimension of a chordal graph.

\begin{corollary}\label{chordalcorollary}
The class of  chordal graphs satisfies the conditions of Theorem  \ref{largegamma}, and so 
\[
\pd(G) = |V(G)| - i(G).  
\] 
for any chordal graph $G$.
\end{corollary}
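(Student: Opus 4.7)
The plan is to verify the two hypotheses of Theorem \ref{largegamma} (with $f=i$) for the class of chordal graphs, and then invoke that theorem directly. The class of chordal graphs is clearly hereditary, since the induced subgraph obtained by deleting a vertex cannot create an induced cycle of length $\geq 4$ that was not already present. So the only real content is to produce, for each chordal graph $G$ with at least one edge, a vertex whose deletion does not decrease the independent domination number $i(G)$.

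To find such a vertex, I would apply Dirac's Theorem (Theorem \ref{dirac}) to obtain a simplicial vertex $v$ of $G$, i.e.\ one with $N(v)\neq\emptyset$ such that $G[N(v)]$ is complete. Let $w$ be any vertex in $N(v)$. Since every other neighbor of $v$ lies in the clique $G[N(v)]$, it is adjacent to $w$; hence $N(v)\setminus\{w\}\subseteq N(w)$, and of course $(v,w)$ is an edge of $G$. Lemma \ref{delete} then immediately yields $i(G)\leq i(G-w)$, which is exactly the inequality required by Theorem \ref{largegamma}.

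With both hypotheses verified, Theorem \ref{largegamma} applied to $f=i$ gives the claimed equality $\pd(G)=|V(G)|-i(G)$ for every chordal graph $G$. The only step that demands any thought is the choice of vertex to delete: note that we do \emph{not} delete the simplicial vertex $v$ itself (its removal can easily drop $i$), but rather a neighbor $w$ of $v$ inside the clique $N(v)\cup\{v\}$; the simpliciality of $v$ is what forces $w$ to dominate everything $v$ dominated, so any independent dominating set for $G-w$ still dominates $v$ (and thus $G$) once we check the easy consequence in Lemma \ref{delete}. This is the main (and essentially only) obstacle, and it is resolved cleanly by Dirac's Theorem together with Lemma \ref{delete}.
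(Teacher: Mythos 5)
Your proposal is correct and follows exactly the paper's argument: apply Dirac's Theorem to get a simplicial vertex $v$, pick any neighbor $w$, observe $N(v)-w\subseteq N(w)$ because $G[N(v)]$ is a clique, and invoke Lemma \ref{delete} to verify the hypothesis of Theorem \ref{largegamma}. No differences worth noting.
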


\begin{proof}
Let $v$ be as in Theorem \ref{dirac}, and let $w$ be any neighbor of $v$.  Since $G[N(v)]$ is complete, we have $N(v)- w \subseteq N(w) $, and so Lemma \ref{delete} applies. 
\end{proof}

\begin{remark}
{\rm
In fact, the above formula for projective dimension holds for all \emph{sequentially Cohen-Macaulay} graphs; this follows from Smith's results on Cohen-Macaulay complexes (\cite{desmith}) and a theorem of Francisco and Van Tuyl (\cite{fv}) which shows that chordal graphs are sequentially Cohen-Macaulay (\cite{fv}).  For details, see  \cite[Theorem 3.33]{mv} (we thank Russ Woodroofe and Rafael Villarreal for pointing  out the correct result and references). However, Theorem  \ref{largegamma} can be applied to graphs that are not sequentially Cohen-Macaulay, see the next Remark. 
}
\end{remark}

\begin{remark}\label{rmsCM}
{\rm
We note that there are hereditary classes of graphs satisfying the hypotheses of Theorem \ref{largegamma} which properly contain the class of chordal graphs but are not contained in the class of sequentially Cohen-Macaulay graphs.  For instance, let $C_n$ denote the $n$-vertex cycle.  If $n$ is congruent to $0$ or $2 \mod 3$, then $\gamma(C_n) = \gamma(C_n - v)$ and $i(C_n) = i(C_n -v)$ for any $v\in V(C_n)$; see Example \ref{tab}.  Since $C_n - v$ is a tree (and therefore chordal), the following hereditary class satisfies the hypotheses of Theorem \ref{largegamma}: 
\[
\mathcal{G} = \{G : G \text{ is chordal or } G = C_n \text{ for some } n \equiv 0, 2 \text{ mod } 3\}.
\]
By \cite{fv}, $C_n$ is not sequentially Cohen-Macaulay for $n \neq 3,5$.  

More generally, let $\mathcal{G}$ be a hereditary class of the type in Theorem \ref{largegamma}, and let $X$ be a set of graphs $G$ satisfying the following: for any $G \in X$, there is some $v\in V(G)$ such that $f(G) \leq f(G - v)$, and $G - v \in \mathcal{G}$.  Then $\mathcal{G} \cup X$ again satisfies the hypotheses of Theorem \ref{largegamma}.  
}
\end{remark}

\begin{remark}\label{c4}
{\rm In looking for possible generalizations of Corollary \ref{chordalcorollary}, one may be tempted to ask if the same bound holds for perfect graphs (as all chordal graphs are perfect).  However, this is easily seen to be false for the $4$-cycle $C_4$, as $\pd(C_4) = 3$, but $n - i(G) = 4 - 2 = 2$.}
\end{remark}

\begin{corollary}\label{path}
Let $P_n$ denote the path on $n$ vertices.  Then 
\[
i(P_n) =  \left\lceil \frac{n}{3} \right\rceil \text{ and so } \pd(P_n) = \left\lfloor \frac{2n}{3} \right\rfloor.
\]
\end{corollary}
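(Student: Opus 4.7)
The plan is to exploit the fact that $P_n$ is a tree, hence chordal, so Corollary \ref{chordalcorollary} reduces the entire statement to computing the independent domination number $i(P_n)$. Once we know $i(P_n) = \lceil n/3 \rceil$, the projective dimension formula follows from $\pd(P_n) = n - i(P_n)$ together with the elementary identity $n - \lceil n/3 \rceil = \lfloor 2n/3 \rfloor$, which is verified by a case check on $n \bmod 3$.

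For the computation of $i(P_n)$, label the vertices $v_1, v_2, \ldots, v_n$ along the path. For the upper bound, I would exhibit an explicit independent dominating set of size $\lceil n/3 \rceil$: take $A = \{v_2, v_5, v_8, \ldots\}$, i.e.\ every third vertex starting at $v_2$, and adjoin $v_n$ in the case $n \equiv 1 \pmod 3$ (so the last vertex is covered). Independence is clear since consecutive chosen vertices are at distance three or two, and domination follows since every $v_j$ lies within distance one of a chosen vertex. A short case check on $n \bmod 3$ confirms $|A| = \lceil n/3 \rceil$.

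For the lower bound, I would observe that each vertex of $P_n$ has degree at most $2$, so any single vertex dominates at most $3$ vertices (itself and its at most two neighbors). Hence every dominating set has cardinality at least $\lceil n/3 \rceil$, and in particular $i(P_n) \geq \gamma(P_n) \geq \lceil n/3 \rceil$. Combined with the upper bound this gives $i(P_n) = \lceil n/3 \rceil$.

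There is no serious obstacle here; the only mildly fiddly step is the case analysis modulo $3$ for both the construction of the explicit set and the verification that $n - \lceil n/3 \rceil = \lfloor 2n/3 \rfloor$. The conceptual content lies entirely in invoking Corollary \ref{chordalcorollary} to convert the algebraic statement into the purely combinatorial computation of $i(P_n)$.
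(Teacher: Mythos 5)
Your proposal is correct and follows exactly the route the paper intends: the corollary is stated without proof immediately after Corollary \ref{chordalcorollary}, relying on paths being chordal so that $\pd(P_n)=n-i(P_n)$, together with the standard computation $i(P_n)=\lceil n/3\rceil$. Your explicit dominating set, the degree-counting lower bound via $i(P_n)\geq\gamma(P_n)\geq\lceil n/3\rceil$, and the mod-$3$ check of $n-\lceil n/3\rceil=\lfloor 2n/3\rfloor$ all check out.
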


Given Theorem \ref{largegamma}, it makes sense to ask when a graph $G$ satisfies $i(G) = \gamma(G)$.  This has been shown to hold for claw-free graphs by Allan and Laskar (\cite{al}).  Below, we define a fairly expansive class of graphs, called \emph{long} graphs, for which this property holds. 

\begin{definition}
We call a graph $G$ on vertex set $V$ \emph{long} if the set $\{x \in V : \deg(x) > 2\}$ is independent. 
\end{definition}

\begin{theorem}\label{long}
Let $G$ be a long graph.  Then $i(G) = \gamma(G)$.  
\end{theorem}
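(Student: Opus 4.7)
The plan is to show that any minimum dominating set $D$ of a long graph $G$ can be converted, by iterated single-vertex exchanges, into an independent dominating set of the same cardinality. Combined with the inequality $\gamma(G) \leq i(G)$ (which holds for every graph since every independent dominating set is itself a dominating set), this will yield the desired equality.

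I would induct on $|E(G[D])|$, the number of edges of the induced subgraph on $D$. If $G[D]$ has no edge then $D$ is already an independent dominating set and we are done. Otherwise, pick any edge $(u,v)$ of $G[D]$. Since $G$ is long, the set of vertices of degree greater than $2$ is independent, so at least one of $u,v$ has degree at most $2$; without loss of generality assume $\deg_G u \leq 2$.

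Because $D$ is minimum, the set $D \setminus \{u\}$ fails to dominate, so there is a vertex $z$ with $N[z] \cap D = \{u\}$, a private neighbor of $u$ relative to $D$. I would rule out $z = u$ using $v \in N(u) \cap D$, and rule out $z = v$ using $v \in N[v] \cap D$ (with $v \neq u$). The only remaining possibility is that $z = w$, where $w$ is a second neighbor of $u$; in particular $\deg_G u = 2$, $N_G(u) = \{v,w\}$, $w \notin D$, and $N_G(w) \cap D = \{u\}$. I would then replace $u$ by $w$ to form $D' := (D \setminus \{u\}) \cup \{w\}$. This set has the same cardinality as $D$ and still dominates $G$, since the only vertices previously covered by $u$ are $u,v,w$, all of which are covered by $D'$. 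Moreover $|E(G[D'])| = |E(G[D])|-1$: removing $u$ kills exactly the single edge $(u,v)$ because $N_G(u) \cap D = \{v\}$, and inserting $w$ introduces no new edges because $N_G(w) \cap (D \setminus \{u\}) = \emptyset$.

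Iterating the swap produces a minimum dominating set with no edges, proving $i(G) \leq \gamma(G)$, and hence $i(G) = \gamma(G)$. The main obstacle I anticipate is the private-neighbor analysis ensuring that $u$ admits a genuine second neighbor $w \notin D$ whose only $D$-neighbor is $u$; this is precisely where the bound $\deg_G u \leq 2$ (forced by the long hypothesis) combines with the minimality of $D$ to do the work, and it is also what guarantees that the swap does not accidentally create a new edge inside $G[D']$.
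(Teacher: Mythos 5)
Your proof is correct, and it takes a genuinely different route from the paper's. The paper deletes all vertices of degree greater than $2$, observes that the remainder is a disjoint union of paths, and then converts a minimum dominating set into an independent one by a fairly lengthy case analysis on each maximal path $P$ together with the at most two high-degree vertices attached to its endpoints, substituting independent dominating sets of various sub-paths (leaning on the formula of Corollary \ref{path}). Your argument is instead a local exchange in the style of the Allan--Laskar private-neighbor proof for claw-free graphs: given an edge $(u,v)$ of $G[D]$ for a minimum dominating set $D$, the long hypothesis forces one endpoint, say $u$, to have degree at most $2$; minimality of $D$ gives $u$ a private neighbor $z$ with $N[z]\cap D=\{u\}$, and your exclusions $z\neq u$ (because $v\in N(u)\cap D$) and $z\neq v$ (because $v\in N[v]\cap D$) correctly pin $z$ down as the unique second neighbor $w$ of $u$, with $w\notin D$ and $N(w)\cap D=\{u\}$. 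The swap $D'=(D\setminus\{u\})\cup\{w\}$ then preserves domination (the only vertices that could lose coverage lie in $N[u]=\{u,v,w\}$, all covered by $D'$) and strictly decreases $|E(G[D])|$, since $N(u)\cap D=\{v\}$ means only the edge $(u,v)$ is lost and $N(w)\cap(D\setminus\{u\})=\emptyset$ means no edge is gained; as $|D'|=|D|$, the new set is again minimum and the induction proceeds. Your approach is shorter and more transparent; the paper's approach is heavier but yields an explicit path-by-path description of the resulting independent dominating set.
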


\begin{proof}
Let $G'$ denote the graph $G$ with all vertices of degree $>2$ deleted.  Then $G'$ is a disjoint union of paths.  Let $P$ be a maximal path in $G'$, and note that no non-endpoint of $P$ can be adjacent to a vertex of degree $>2$ in $G$ (since then its degree would exceed $2$).  Similarly, note that each endpoint of $P$ can be connected (in $G$) to at most one vertex of degree $>2$.  We write $\overline{P}$ to denote the path $P$, plus any vertices of degree $>2$ which are adjacent to the endpoints of $P$.  

Now let $A$ be a dominating set of vertices of $G$.  We construct a dominating set $B$ with $|B| \leq |A|$, such that $B \cap \overline{P}$ is independent for all maximal paths $P$ of $G'$.  Since no two vertices of degree $>2$ are neighbors, it will follow that $B$ is independent.  By further assuming $A$ to be of minimal cardinality, this will prove the theorem.    

If $\overline{P} = P$, we can clearly replace $A \cap P$ with an independent dominating set whose size does not exceed $|A \cap P|$ (by Corollary \ref{path}).  

Next, order the remaining maximal paths of $G': P_1, P_2, \ldots, P_k$.  Fix some $P_i$, and set $P = P_i$.  Let $P$ have vertex set $x_1, x_2, \ldots, x_t$ and edge set $(x_j, x_{j+ 1 \text{ mod }  t})$.  Then $\overline{P} - P$ consists of either one or two vertices.  

First, suppose that $\overline{P} - P$ consists of one vertex, and that it is adjacent to $x_1$.  Call this vertex $v$.  If $v \in A$, replace $A \cap P$ with an independent dominating set of $P - x_1$.  If $v\notin A$ and no neighbor of $v$ not in $P$ is contained in $A$, we must have $x_1 \in A$.  Thus, we replace $A \cap (P - x_1)$ with an independent dominating set of $P- x_1 - x_2$.  Finally, if $v \notin A$ but some other path contains a neighbor of $v$ that is in $A$, we replace $A \cap P$ with an independent dominating set of $P$.  The case when $v$ is adjacent to both $x_1$ and $x_t$ is handled similarly: if $v$ is not adjacent to any other element of $A$, then replace $A \cap (P- x_1)$ with a dominating independent set of $P - x_1 - x_2$ (and keep $x_1 \in A$).  

Now, we focus on the case when $\overline{P}-P$ consists of two vertices.  Let $v$ be the neighbor of $x_1$, and let $y$ be the neighbor of $x_2$.  If $v, y \in A$, replace $A \cap P$ with an independent dominating set of $P - x_1 - x_t$.  If, without loss of generality, $v \in A$ and $y\notin A$, there are two subcases.  First, suppose that $x_t$ is the only neighbor of $y$ contained in $A$.  Then we may replace $A \cap (P - x_t)$ with an independent dominating set of $P - x_1 - x_t - x_{t-1}$.  If on the other hand $A$ contains a neighbor of $y$ in another path, we replace $A \cap P$ with an independent dominating set of $P$. 

Next, consider the case where $v, y \notin A$.  If $x_1$ is the only neighbor of $v$ contained in $A$ and $x_t$ is the only neighbor of $y$ contained in $A$, then either $t \geq 3$, and we may replace $A \cap (P - x_1 - x_t)$ with an independent dominating set of $P - x_1 - x_t - x_2 - x_{t-1}$, or $x_1$ and $x_t$ are neighbors (that is, $t = 2$).  In the latter case, we simply replace $x_1$ and $x_2$ with $v$ and $y$.  If, without loss, $x_1$ is the only neighbor of $v$ in $A$ but $y$ has neighbors other than $x_t$ in $A$, we may replace $A\cap (P - x_1)$ with an independent dominating set of $P- x_1 - x_2$.  Finally, if both $v$ and $y$ have neighbors in $A - P$, we may replace $A \cap P$ with an independent dominating set of $P$.  

Performing this process on each path yields a dominating set $B$ such that no two vertices $x, x' \in B \cap \overline{P}$ (for any path $P$) are neighbors.  It follows, then, that $B$ is independent.  
\end{proof}

It is worth noting that the converse to Theorem \ref{long} is false, as shown by Figure \ref{hgraph}.\\

\begin{figure}[htp]
\centering
\includegraphics[height = .9in]{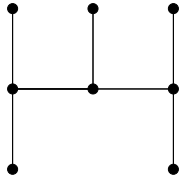}
\caption{A tree $T$ with $i(T) = \gamma(T) = 3$.}\label{hgraph}
\end{figure}

\begin{corollary}
Let $G$ be a graph, and let $G'$ be the graph obtained by subdividing each edge $(v, w)$ of $G$ whenever both $\deg(v)$ and $\deg(w) $ exceed $2$.  Then $i(G') = \gamma(G')$.
\end{corollary}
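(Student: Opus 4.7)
The plan is to apply Theorem \ref{long} directly: it suffices to verify that $G'$ is long, meaning that the set of vertices of degree $>2$ in $G'$ is independent.

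First I would analyze how vertex degrees change under the subdivision operation. Subdividing an edge $(v,w)$ replaces it by a path $v-z-w$ through a new vertex $z$. The new vertex $z$ has degree exactly $2$ in $G'$. Moreover, every original vertex $u \in V(G)$ retains its old degree in $G'$: each edge of $G$ incident to $u$ either survives in $G'$ or is subdivided, and in the latter case $u$ is adjacent (in $G'$) to the freshly inserted subdivision vertex instead of to the old neighbor. Hence
\[
\deg_{G'}(u) = \deg_G(u) \quad \text{for all } u \in V(G), \qquad \deg_{G'}(z) = 2 \quad \text{for each new vertex } z.
\]

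Consequently, the vertices of degree $>2$ in $G'$ are precisely the vertices $u \in V(G)$ with $\deg_G(u) > 2$. I then need to show no two such vertices are adjacent in $G'$. Suppose $v,w \in V(G)$ both have $\deg_G(v), \deg_G(w) > 2$. By the very construction of $G'$, the edge $(v,w)$ (if present in $G$) was subdivided, so $v$ and $w$ are not adjacent in $G'$; and if $(v,w) \notin E(G)$ then certainly $(v,w) \notin E(G')$ either, since subdivision does not create edges between original vertices. Therefore the high-degree vertices of $G'$ form an independent set, so $G'$ is long.

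Applying Theorem \ref{long} to $G'$ yields $i(G') = \gamma(G')$. The proof is essentially immediate once the degree bookkeeping is done; I do not foresee a real obstacle, only the need to be careful that subdivision preserves the degrees of original vertices and that every edge between two high-degree vertices of $G$ is indeed subdivided.
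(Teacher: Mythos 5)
Your proposal is correct and takes exactly the same route as the paper, which simply observes that $G'$ is a long graph and invokes Theorem \ref{long}; you have merely spelled out the degree bookkeeping that the paper leaves as ``easily seen.'' No issues.
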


\begin{proof}
It is easily seen that $G'$ is a long graph.
\end{proof}

It is not always true that $\pd(G) = |V(G)| - i(G)$ for a long graph $G$ (see Remark \ref{c4}). However, combining Theorem \ref{largegamma} and \ref{long} we have:

\begin{corollary}
Suppose $\mathcal{G}$ is a hereditary class of long graphs such that whenever $G \in \mathcal{G}$ has at least one edge, there is some vertex $v$ of $G$ with $\gamma(G) \leq \gamma(G - v)$.  Then for any $G \in \mathcal{G}$ with $n$ vertices,
\[
\pd(G) = n - \gamma(G) = n -i(G).
\]

\end{corollary}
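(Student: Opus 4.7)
The plan is simply to splice together Theorem \ref{long} with the $f = i$ case of Theorem \ref{largegamma}, using hereditariness to propagate the long-graph property to induced subgraphs.

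First, I would observe that since $\mathcal{G}$ is a hereditary class of long graphs, every vertex-deleted subgraph $G - v$ of some $G \in \mathcal{G}$ is again a long graph in $\mathcal{G}$. Therefore Theorem \ref{long} applies to every member of $\mathcal{G}$ and to every $G - v$, giving the identity $i(H) = \gamma(H)$ for all $H \in \mathcal{G}$.

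Next, I would use this identity to translate the hypothesis. By assumption, for each $G \in \mathcal{G}$ with at least one edge there is a vertex $v$ with $\gamma(G) \leq \gamma(G - v)$. Rewriting both sides via $i = \gamma$ shows that this is the same as $i(G) \leq i(G - v)$. Hence $\mathcal{G}$ satisfies the hypothesis of Theorem \ref{largegamma} with $f = i$.

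Finally, Theorem \ref{largegamma} in its $f = i$ formulation yields the equality $\pd(G) = n - i(G)$ for every $G \in \mathcal{G}$ on $n$ vertices, and invoking $i(G) = \gamma(G)$ once more produces the second equality $n - i(G) = n - \gamma(G)$, completing the chain $\pd(G) = n - \gamma(G) = n - i(G)$. There is no real obstacle here; the only point requiring any care is that one must verify that both $G$ and $G - v$ remain in the hereditary class of long graphs so that Theorem \ref{long} is applicable on both sides of the inequality being translated.
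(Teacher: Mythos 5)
Your proof is correct and matches the paper's (implicit) argument, which likewise just splices Theorem \ref{long} into Theorem \ref{largegamma}; the only cosmetic difference is that you translate the hypothesis $\gamma(G)\leq\gamma(G-v)$ into $i(G)\leq i(G-v)$ and invoke the $f=i$ case of Theorem \ref{largegamma}, whereas one could equally invoke the $f=\gamma$ case and convert the conclusion afterward via $i(G)=\gamma(G)$ and Proposition \ref{pdlowbound}. The point you flag --- that hereditariness keeps $G-v$ inside the class of long graphs, so $i=\gamma$ holds on both sides of the inequality --- is exactly the one detail that needs checking, and you handle it correctly.
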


\section{Homology and Connectivity of Independence Complexes}\label{homsec}
In this section we collect various corollaries on connectivity of independence complexes. All the hard work has been done in the previous Sections. For the definitions of various domination parameters, see Section \ref{dominationsection}.

As Hochster's formula (Corollary \ref{hochster}) limits the possible non-zero homology of induced subcomplexes, the numerous bounds obtained earlier for the projective dimension of a graph also allow us to detect vanishing homology.  Indeed, if $G$ is a graph and we set $W = V(G)$ in Proposition \ref{hochster}, we have the following.

\begin{corollary}\label{pdhomology}
Let $G$ be a graph on $n$ vertices.  Then $\tilde{H}_{k}(\ind(G)) = 0$ whenever $k  < n - \pd(G) - 1$.
\end{corollary}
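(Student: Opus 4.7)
The plan is to read this off directly from the specialization of Hochster's formula given in Corollary \ref{hochster}, by restricting attention to the full vertex set.

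Specifically, I would start by fixing $W = V(G)$, so that $|W| = n$ and $G[W] = G$. Corollary \ref{hochster} characterizes $\pd(G)$ as the least integer $i$ for which
\[
\tilde{H}_{|W| - i - j - 1}(\ind(G[W])) = 0
\]
holds for every $j > 0$ and every subset $W$ of $V(G)$. Taking $i = \pd(G)$ and the distinguished choice $W = V(G)$, this statement specializes to: $\tilde{H}_{n - \pd(G) - j - 1}(\ind(G)) = 0$ for all $j > 0$.

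Now I would make the change of index $k = n - \pd(G) - j - 1$. The condition $j > 0$ is then equivalent to $k < n - \pd(G) - 1$, and for each such $k$ we recover exactly the asserted vanishing $\tilde{H}_k(\ind(G)) = 0$. Since no nontrivial step beyond this substitution is needed, there is essentially no obstacle; the only thing to double-check is that we are using the correct direction of the Hochster-based characterization (namely the ``$\pd(G)$ forces vanishing above the line $k + j + 1 = n - \pd(G)$'' direction), which is built into the ``least integer $i$ such that'' formulation of Corollary \ref{hochster}.
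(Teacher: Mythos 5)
Your argument is exactly the paper's: the authors obtain this corollary by setting $W = V(G)$ in Corollary \ref{hochster} and reindexing, which is precisely your substitution $k = n - \pd(G) - j - 1$ with $j > 0$. The proof is correct and complete.
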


\begin{remark}
{\rm Let $\Delta$ be a simplicial complex.  By The Hurewicz Isomorphism (see, for instance, \cite{hatcher}), $\Delta$ is $m$-connected if and only if $\tilde{H}_k(\Delta) = 0$ for all $k \leq m$ and $\pi_1(\Delta) = 0$.  Thus the results in this section also show homotopic connectivity when the independence complex in question is simply connected.  Many graphs are known to have simply connected independence complexes, such as graphs $G$ with $\gamma_0(G) > 4$ (\cite{st}) and \emph{most} claw-free graphs (\cite{engstromclawfree}), thus our results give bounds on the homotopic connectivity in those cases.  In general, there is no algorithm to determine if a given independence complex is simply connected (see \cite{abzdisproof}), but see \cite{as} and \cite{rs} for approaches that work well in practice.}
\end{remark}

In \cite{abz}, Aharoni, Berger, and Ziv show the following.

\begin{theorem}[\cite{abz}]
If $G$ is chordal, $\tilde{H}_k(\ind(G)) = 0$ for $k < \gamma(G) - 1$.
\end{theorem}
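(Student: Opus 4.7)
The plan is to deduce the theorem by concatenating two results established earlier: the exact formula $\pd(G) = |V(G)| - i(G)$ for chordal graphs (Corollary \ref{chordalcorollary}) and the Hochster-formula translation from projective dimension to independence-complex homology (Corollary \ref{pdhomology}). The whole argument is then a one-line substitution followed by a trivial inequality between domination parameters.

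Explicitly, let $G$ be chordal on $n$ vertices. Corollary \ref{chordalcorollary} gives $\pd(G) = n - i(G)$, and substituting into Corollary \ref{pdhomology} yields
\[
\tilde{H}_k(\ind(G)) = 0 \quad \text{for every } k < n - \pd(G) - 1 = i(G) - 1.
\]
Since every independent dominating set is in particular a dominating set, $\gamma(G) \leq i(G)$, so any $k$ with $k < \gamma(G) - 1$ satisfies $k < i(G) - 1$, and the theorem follows.

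There is no genuine obstacle at this stage: all the substance has already been absorbed into Corollary \ref{chordalcorollary}, whose proof combined Dirac's theorem (Theorem \ref{dirac}) with the domination-comparison lemma (Lemma \ref{delete}) inside the inductive framework of Theorem \ref{largegamma}. A pleasant side effect of this approach is that the same reasoning actually produces the formally stronger vanishing $\tilde{H}_k(\ind(G)) = 0$ for $k < i(G) - 1$, and more generally, exhibits the Aharoni--Berger--Ziv bound as a corollary of an exact computation of projective dimension rather than as a purely topological phenomenon.
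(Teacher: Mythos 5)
Your proof is correct and is essentially the paper's own argument: the paper recovers this statement (in the stronger form $k < i(G) - 1$) by combining Corollary \ref{chordalcorollary} with Corollary \ref{pdhomology}, exactly as you do. Your concluding step $\gamma(G) \leq i(G)$ is the standard (and correct) direction of that inequality --- it is precisely what makes the $i(G)$ version a strengthening of the $\gamma(G)$ version --- even though Proposition \ref{gammaind} in the paper appears to state it with the inequality reversed.
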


In \cite{russ}, Woodroofe proves the following strengthening of this result (chordal graphs are sequentially Cohen-Macaulay; see \cite{fv}).

\begin{theorem}[\cite{russ}]
Let $G$ be a sequentially Cohen-Macaulay graph.  Then $\tilde{H}_k(\ind(G)) = 0$ for $k < i(G) -1$. 
\end{theorem}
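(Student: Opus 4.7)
The plan is to combine Hochster's formula with an equality for the projective dimension of sequentially Cohen-Macaulay graphs. By Corollary \ref{pdhomology}, $\tilde{H}_k(\ind(G)) = 0$ whenever $k < n - \pd(G) - 1$, where $n = |V(G)|$. Thus if we can prove the upper bound $\pd(G) \leq n - i(G)$ for sequentially Cohen-Macaulay graphs, then $n - \pd(G) - 1 \geq i(G) - 1$, and the conclusion $\tilde{H}_k(\ind(G)) = 0$ for $k < i(G) - 1$ follows immediately. Together with the universal lower bound $\pd(G) \geq n - i(G)$ from Proposition \ref{pdlowbound}, the problem reduces to establishing the equality $\pd(G) = n - i(G)$ in the sequentially Cohen-Macaulay setting.

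First I would verify that $\bh(I(G)) = n - i(G)$ for every graph $G$. By Observation \ref{bh}, $\bh(I(G))$ equals the maximum cardinality of a minimal vertex cover of $G$, and by Observation \ref{vertexcover} the complements of minimal vertex covers are precisely the independent dominating sets; hence the largest minimal vertex cover has size exactly $n - i(G)$.

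Next I would invoke Smith's characterization (\cite{desmith}) of sequentially Cohen-Macaulay Stanley-Reisner rings: for such rings the projective dimension coincides with the big height of the defining ideal. Applied to $S/I(G)$ this gives $\pd(G) = \bh(I(G)) = n - i(G)$, and feeding this into Corollary \ref{pdhomology} completes the argument.

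The main obstacle is the appeal to Smith's theorem: the equality $\pd(S/I) = \bh(I)$ for sequentially Cohen-Macaulay Stanley-Reisner rings is the one nontrivial ingredient not developed in this paper and must be imported from \cite{desmith} (as already noted in the remark following Corollary \ref{chordalcorollary}). Once that input is available, the remaining pieces, namely Observations \ref{bh} and \ref{vertexcover}, Proposition \ref{pdlowbound}, and Corollary \ref{pdhomology}, are already in place and assemble the statement in a few lines.
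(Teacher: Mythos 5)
Your argument is correct, but it is worth being clear about what the paper actually does with this statement: the theorem is quoted from Woodroofe's paper \cite{russ} and is not proved here. What the paper does prove is the chordal special case (the corollary immediately following), and it does so by a self-contained combinatorial route: Dirac's theorem plus Lemma \ref{delete} verify the hypothesis of Theorem \ref{largegamma}, which yields $\pd(G) = |V(G)| - i(G)$ for chordal graphs by the inductive machinery of Theorem \ref{ME}, after which Corollary \ref{pdhomology} finishes. Your proof instead establishes the full sequentially Cohen--Macaulay statement by importing the equality $\pd(S/I) = \bh(I)$ for sequentially Cohen--Macaulay squarefree monomial ideals from \cite{desmith} (see also \cite[Theorem 3.33]{mv}), combining it with the identification $\bh(I(G)) = n - i(G)$ via Observations \ref{bh} and \ref{vertexcover}, and then applying Corollary \ref{pdhomology}; this is exactly the chain of reasoning the authors sketch in the remark following Corollary \ref{chordalcorollary}, and all the steps you carry out within the paper's framework are sound. (Note that the lower bound $\pd(G) \geq n - i(G)$ from Proposition \ref{pdlowbound} is not actually needed for the vanishing conclusion; only the upper bound matters.) The trade-off is that your route proves the stronger, general statement at the cost of a substantial external algebraic input, whereas the paper's own machinery gives an elementary, purely graph-theoretic proof but only for the hereditary classes (such as chordal graphs) covered by Theorem \ref{largegamma}.
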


As a first application of Corollary \ref{pdhomology}, we can recover this result in the case when $G$ is chordal. See also Theorem \ref{largegamma} and Remark \ref{rmsCM}.

\begin{corollary}
If $G$ is chordal, $\tilde{H}_k(\ind(G)) = 0$ for $k < i(G) -1$.  
\end{corollary}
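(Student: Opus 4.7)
The plan is to combine two results already established earlier in the paper: Corollary \ref{chordalcorollary}, which gives the exact value $\pd(G) = |V(G)| - i(G)$ for a chordal graph $G$, and Corollary \ref{pdhomology}, which translates a bound on projective dimension into a vanishing range for the reduced homology of the independence complex.

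First I would write $n = |V(G)|$ and invoke Corollary \ref{chordalcorollary} to get $\pd(G) = n - i(G)$. Substituting this into the inequality from Corollary \ref{pdhomology}, the condition $k < n - \pd(G) - 1$ becomes
\[
k < n - (n - i(G)) - 1 = i(G) - 1,
\]
so $\tilde{H}_k(\ind(G)) = 0$ for all such $k$, which is exactly the claim.

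There is no real obstacle here: the technical content has been front-loaded into Corollary \ref{chordalcorollary} (via Theorem \ref{largegamma} and Dirac's Theorem) and into Corollary \ref{pdhomology} (via Hochster's formula). The only thing to check is that we may apply Corollary \ref{pdhomology} to the whole graph $G$ rather than an induced subgraph, which is precisely the statement of that corollary with $W = V(G)$. Thus the proof is essentially a one-line substitution, and no additional argument is required.
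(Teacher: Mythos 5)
Your proposal is correct and matches the paper's own argument: both combine Corollary \ref{chordalcorollary} ($\pd(G) = n - i(G)$) with Corollary \ref{pdhomology} via a direct substitution. No further comment is needed.
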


\begin{proof}
If $k < n - \pd(G) - 1$, Corollary \ref{chordalcorollary} gives us that $k < i(G) - 1$, and we can apply Corollary \ref{pdhomology}.
\end{proof}

Similarly, we obtain the following general bound for the homology of a graph independence complex by using Theorem \ref{edgedom}.

\begin{corollary}\label{edgedomhomology}
Let $G$ be a graph.  Then $\tilde{H}_k(\ind(G)) = 0$ whenever $k < \epsilon(G) - 1$.
\end{corollary}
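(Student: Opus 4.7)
The plan is to chain together Theorem \ref{edgedom} and Corollary \ref{pdhomology}, with no new inductive work needed. First, I would invoke Theorem \ref{edgedom} to get the projective dimension bound $\pd(G) \leq n - \epsilon(G)$, where $n = |V(G)|$. Rearranging this yields $n - \pd(G) - 1 \geq \epsilon(G) - 1$.

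Next, I would apply Corollary \ref{pdhomology}, which asserts $\tilde{H}_k(\ind(G)) = 0$ whenever $k < n - \pd(G) - 1$. Combining this with the inequality above, any $k < \epsilon(G) - 1$ automatically satisfies $k < n - \pd(G) - 1$, so $\tilde{H}_k(\ind(G)) = 0$ as required. All of the real work — the inductive setup of Theorem \ref{ME} applied to the edgewise domination function in the proof of Theorem \ref{edgedom}, and the translation from Betti numbers to reduced homology via Hochster's formula encoded in Corollary \ref{pdhomology} — has already been carried out in the earlier sections. Consequently there is no substantive obstacle remaining; the argument is a direct two-line chaining of existing results, and I would write it up as such.
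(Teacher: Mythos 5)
Your proposal is correct and matches the paper's own (implicit) argument exactly: the corollary is stated as an immediate consequence of chaining Theorem \ref{edgedom} with Corollary \ref{pdhomology}, which is precisely the two-line derivation you give. Nothing further is needed.
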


Corollary \ref{edgedomhomology} also allows us to recover a related result of Chudnovsky.  

\begin{corollary}[\cite{chudnovsky}]
Let $G$ be a graph.  Then $\tilde{H}_k(\ind(G)) = 0$ whenever $k < \frac{\gamma_0(G)}{2} - 1$.
\end{corollary}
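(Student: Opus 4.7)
The plan is to observe that this is an immediate consequence of two previously-established results, combined in the obvious way. Corollary \ref{edgedomhomology} tells us that $\tilde{H}_k(\ind(G)) = 0$ whenever $k < \epsilon(G) - 1$, and the last assertion of Proposition \ref{gammaind} gives the inequality $\epsilon(G) \geq \frac{\gamma_0(G)}{2}$. So the hypothesis $k < \frac{\gamma_0(G)}{2} - 1$ forces $k < \epsilon(G) - 1$, and Corollary \ref{edgedomhomology} finishes the argument.

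In more detail, the only step to verify is the chain of inequalities. Given $k < \frac{\gamma_0(G)}{2} - 1$, we can write
\[
k < \frac{\gamma_0(G)}{2} - 1 \leq \epsilon(G) - 1,
\]
where the second inequality is Proposition \ref{gammaind}. Applying Corollary \ref{edgedomhomology} then yields $\tilde{H}_k(\ind(G)) = 0$, as required.

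There is no real obstacle here: the substantive content has already been done. The bound $\epsilon(G) \geq \gamma_0(G)/2$ from Proposition \ref{gammaind} came from the observation that the vertices appearing in any edgewise-dominating set $F$ form a set of size at most $2|F|$ which dominates $V(G)$ in the sense of $\gamma_0$, while Corollary \ref{edgedomhomology} itself was the homological shadow of Theorem \ref{edgedom} via Hochster's formula (Corollary \ref{pdhomology}). Thus the proof reduces to a single chain of two inequalities.
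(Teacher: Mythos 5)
Your proof is correct and matches the paper's own argument exactly: the paper likewise derives this corollary by combining the inequality $\epsilon(G) \geq \frac{\gamma_0(G)}{2}$ from Proposition \ref{gammaind} with Corollary \ref{edgedomhomology}. Nothing further is needed.
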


\begin{proof}
 The result follows from Proposition \ref{gammaind} and Corollary \ref{edgedomhomology}. 
\end{proof}

Theorem \ref{pdind}  allows us to prove the following, originally shown in \cite{ah} (albeit with different terminology).  

\begin{corollary}\label{tauhomology}
Let $G$ be a graph.  Then $\tilde{H}_k(\ind(G)) = 0$ for $k < \tau(G) -1$. 
\end{corollary}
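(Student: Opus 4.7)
The plan is simply to combine Theorem~\ref{pdind} with Corollary~\ref{pdhomology}, since these two results together form exactly the sort of ``projective-dimension bound $\Rightarrow$ homology vanishing'' pattern that organizes this whole section. First I would assume that $G$ has no isolated vertices, so that Theorem~\ref{pdind} applies directly and gives the projective-dimension bound
\[
\pd(G) \;\leq\; n - \tau(G),
\]
where $n = |V(G)|$. Rearranging, $n - \pd(G) - 1 \geq \tau(G) - 1$, so every integer $k$ with $k < \tau(G) - 1$ also satisfies $k < n - \pd(G) - 1$.

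Next I would invoke Corollary~\ref{pdhomology} applied to $W = V(G)$ (this is the specialization of Hochster's formula used throughout the section), which says $\tilde H_k(\ind(G)) = 0$ whenever $k < n - \pd(G) - 1$. Combining the two inequalities gives the desired vanishing range $k < \tau(G) - 1$, which completes the argument in the case $\is(G) = \emptyset$.

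For the remaining case, if $G$ has any isolated vertex $v$, then $v$ is a cone point of $\ind(G)$, so $\ind(G)$ is contractible and all reduced homology groups vanish, making the statement vacuous (or, depending on how one interprets $\tau(G)$ in the presence of isolated vertices, one simply restricts $\tau$ to $\overline{G}$ and notes that adding a cone point preserves contractibility).

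There is really no serious obstacle here; the only mild subtlety is aligning the hypothesis of Theorem~\ref{pdind}, which is stated for graphs without isolated vertices, with the unrestricted statement of Corollary~\ref{tauhomology}. This is resolved by the cone-point observation above, after which the corollary falls out as a one-line consequence of the two cited results.
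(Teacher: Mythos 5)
Your proof is correct and follows exactly the route the paper intends: the paper states this corollary as an immediate consequence of Theorem~\ref{pdind} combined with the Hochster-formula specialization in Corollary~\ref{pdhomology}, which is precisely your argument. Your extra remark handling isolated vertices via the cone-point observation is a sound way to reconcile the hypothesis of Theorem~\ref{pdind} with the unrestricted statement of the corollary.
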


In \cite{barmak}, Barmak proves the following two theorems.

\begin{theorem}[\cite{barmak}]\label{firstbarmak}
Let $G$ be a claw-free graph.  Then $\ind(G)$ is $\left\lceil \frac{\dim(\ind(G) -3}{2} \right\rceil$-connected.
\end{theorem}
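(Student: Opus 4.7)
The plan is to deduce Barmak's bound from a projective dimension estimate via Hochster's formula. Specifically, if one can show that every claw-free graph $G$ on $n$ vertices satisfies
\[
\pd(G) \leq n - \left\lceil \frac{\alpha(G)}{2} \right\rceil,
\]
where $\alpha(G) = \dim\ind(G) + 1$ is the independence number, then Corollary \ref{pdhomology} will give $\tilde H_k(\ind(G)) = 0$ for $k < \lceil \alpha(G)/2 \rceil - 1$, which simplifies to $k \leq \lceil (\dim\ind(G) - 3)/2 \rceil$. That is precisely the homological content of Barmak's theorem (the simply connected case discussed in the remark after Corollary \ref{pdhomology} handles the full homotopical statement when applicable).

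To prove this projective dimension inequality, I would apply Theorem \ref{ME} to the hereditary class of claw-free graphs with the function $f(G) = \lceil \alpha(G)/2 \rceil$ and sequence length $k = 1$. Condition (1) is immediate, since an edgeless graph on $n$ vertices has $\alpha = n$ and $\lceil n/2 \rceil \leq n$. For condition (2), the claw-free hypothesis is the essential ingredient: for any vertex $v_1$, a maximum independent set $I$ of $G$ satisfies $|I \cap \st v_1| \leq 2$, because either $v_1 \in I$ (forcing $I \cap N(v_1) = \emptyset$) or $v_1 \notin I$ and $I \cap N(v_1)$ is an independent subset of $N(v_1)$, which contains at most two elements since $G$ has no induced $K_{1,3}$. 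Therefore $\alpha(G - \st v_1) \geq \alpha(G) - 2$, which yields $f(G - \st v_1) + 1 \geq f(G)$ after a brief ceiling manipulation.

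Condition (3) demands $f(\overline{G - v_1}) + |\is(G - v_1)| \geq f(G)$, and here the choice of $v_1$ becomes delicate. I would take $v_1$ to be a vertex lying outside at least one maximum independent set of $G$, so that $\alpha(G - v_1) = \alpha(G)$. Writing $I := |\is(G - v_1)|$, the identity $\lceil x \rceil + j = \lceil x + j \rceil$ for integers $j$ yields
\[
f(\overline{G - v_1}) + I = \left\lceil \frac{\alpha(G) - I}{2} \right\rceil + I = \left\lceil \frac{\alpha(G) + I}{2} \right\rceil \geq f(G).
\]
The main obstacle in this plan is ensuring that such a vertex $v_1$ exists whenever $G$ has an edge. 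This I would justify by observing that if every vertex of $G$ were contained in \emph{every} maximum independent set, then the unique maximum independent set would be all of $V(G)$, contradicting the assumption that $G$ has at least one edge. Clearing this obstacle, Theorem \ref{ME} delivers the projective dimension bound and Barmak's theorem follows.
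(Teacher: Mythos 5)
Your argument is correct, and it reaches the same conclusion the paper itself actually establishes: note that Theorem \ref{firstbarmak} is quoted from Barmak and not proved here; the paper only recovers its \emph{homological} analogue, as the case $m=3$ of the corollary on $K_{1,m}$-free graphs, and you are explicit that your route likewise yields only the homology statement, so the scope matches. The route itself is genuinely different. The paper passes through the domination parameter $\tau$: taking a maximum independent set $A$ and a set $X$ with $A \subseteq N(X)$, claw-freeness forces each $x \in X$ to dominate at most two vertices of $A$, so $\tau(G) \geq \gamma_0(A,G) \geq \lceil \alpha(G)/2 \rceil$, and then Theorem \ref{pdind} gives $\pd(G) \leq n - \tau(G)$ before Hochster's formula is applied. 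You instead feed $f(G) = \lceil \alpha(G)/2 \rceil$ directly into the axiomatic Theorem \ref{ME} with $k=1$: your verification of condition (2) via $|I \cap \st v_1| \leq 2$ (this is exactly where claw-freeness enters), your choice of $v_1$ outside some maximum independent set so that $\alpha(G-v_1)=\alpha(G)$, the identity $\alpha(\overline{G-v_1}) = \alpha(G-v_1) - |\is(G-v_1)|$, and the ceiling arithmetic all check out, as does the existence argument for $v_1$ when $G$ has an edge. What each approach buys: the paper's version yields the a priori stronger intermediate bound $\pd(G) \leq n - \tau(G)$ (and $\tau(G)$ can exceed $\lceil \alpha(G)/2\rceil$), ties the result into the domination-parameter framework that also powers the corollary on sets at pairwise distance $\geq 3$, and handles all $K_{1,m}$-free graphs uniformly; your version is more self-contained in that it bypasses the separate and somewhat delicate induction behind Theorem \ref{pdind}, relying only on the mechanical Theorem \ref{ME}, and it would generalize just as easily by taking $f(G) = \lceil \alpha(G)/m \rceil$ for $K_{1,m+1}$-free graphs.
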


\begin{theorem}[\cite{barmak}]\label{secondbarmak}
Let $G$ be a graph with $A \subseteq V(G)$, and suppose the distance between any two vertices of $A$ is at least $3$.  Then $G$ is $(|A| - 2)$-connected.  
\end{theorem}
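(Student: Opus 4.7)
The plan is to deduce this as a direct consequence of Corollary \ref{tauhomology} after showing that the hypothesis on $A$ forces $\tau(G) \geq |A|$. (I read ``$G$ is $(|A|-2)$-connected'' as a typo for ``$\ind(G)$ is $(|A|-2)$-connected'', so the statement becomes a homological vanishing statement amenable to our machinery.)

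First I would unpack the distance hypothesis into two clean combinatorial properties of $A$: any two vertices $a, a' \in A$ are non-adjacent (since their distance is at least $3 > 1$), so $A$ is independent; and no two vertices of $A$ share a common neighbor (since a common neighbor would realize distance $2$). Both facts are immediate but together they are exactly what is needed.

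Next I would prove $\gamma_0(A, G) \geq |A|$. Given any $X \subseteq V(G)$ with $A \subseteq N(X)$, choose for each $a \in A$ a neighbor $\phi(a) \in X$ of $a$; the no-common-neighbor property forces $\phi \colon A \to X$ to be injective, for distinct $a, a'$ cannot both have $\phi(a) = \phi(a') = x$, as then $x$ would be a common neighbor. Hence $|X| \geq |A|$, giving $\gamma_0(A, G) \geq |A|$, and since $A$ is independent, $\tau(G) \geq \gamma_0(A, G) \geq |A|$ by definition.

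At this point Corollary \ref{tauhomology} yields $\tilde H_k(\ind(G)) = 0$ for every $k < \tau(G) - 1$, and in particular for every $k \leq |A| - 2$, which is the desired $(|A|-2)$-connectivity in the homological sense. The main ``obstacle'' is essentially psychological: once one recognizes that a set of pairwise distance $\geq 3$ is automatically a witness for $\tau$, all the substantial work has already been done in Theorem \ref{pdind}. If homotopy connectivity is required, the remark following Corollary \ref{pdhomology} on the Hurewicz isomorphism (together with results guaranteeing simple-connectedness) upgrades the conclusion accordingly.
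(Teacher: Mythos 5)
Your proposal is correct and matches the paper's own treatment essentially verbatim: the paper cites the full homotopy statement from Barmak and derives only its homological analogue, by exactly your argument (the distance hypothesis makes $A$ independent with no two members sharing a neighbor, so $\tau(G) \geq \gamma_0(A,G) \geq |A|$, and Theorem \ref{pdind} via Corollary \ref{tauhomology} finishes). Your reading of ``$G$ is $(|A|-2)$-connected'' as referring to $\ind(G)$, and your caveat that upgrading homological vanishing to genuine connectivity requires simple connectivity via Hurewicz, are both consistent with the paper's remarks.
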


The following Corollary shows that the homological analogue of Theorem \ref{firstbarmak} is actually a special case of a more general phenomenon for graphs which are $K_{1, m}$-free. 

\begin{corollary}
Let $G$ be a $K_{1, m}$-free graph without isolated vertices.  Then $\tilde{H}_k(\ind(G)) = 0$ for 
\[
k \leq \left\lceil \frac{\dim(\ind(G)) - 2m + 3}{m-1} \right\rceil.
\]
\end{corollary}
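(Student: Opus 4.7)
The plan is to invoke Corollary \ref{tauhomology}, which states that $\tilde H_k(\ind(G)) = 0$ whenever $k < \tau(G) - 1$, equivalently $k \leq \tau(G) - 2$. It therefore suffices to establish the lower bound
\[
\tau(G) \geq \left\lceil \frac{D + 1}{m - 1} \right\rceil,
\]
where $D = \dim(\ind(G))$. Using the identity $\lceil x \rceil - 2 = \lceil x - 2 \rceil$ with $x = (D+1)/(m-1)$, this translates into $\tau(G) - 2 \geq \lceil (D - 2m + 3)/(m-1) \rceil$, which is exactly the vanishing range demanded by the corollary.

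To prove the $\tau$-bound I will fix a maximum independent set $A \subseteq V(G)$, so that $|A| = D + 1$, and estimate $\gamma_0(A, G)$. Given any $A' \subseteq V(G)$ with $A \subseteq N(A')$, the key observation is that for each $a' \in A'$ the set $N(a') \cap A$ is an independent subset of $N(a')$, and the $K_{1, m}$-free hypothesis forces $|N(a') \cap A| \leq m - 1$---otherwise $a'$ together with $m$ independent neighbors chosen from $A$ would form an induced $K_{1, m}$. Summing these inequalities over $a' \in A'$ gives $\sum_{a' \in A'} |N(a') \cap A| \geq |A| = D+1$, hence $(m-1)|A'| \geq D+1$, and integrality of $|A'|$ yields $\gamma_0(A, G) \geq \lceil (D+1)/(m-1) \rceil$. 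Since $\tau(G) \geq \gamma_0(A, G)$ by definition, the desired inequality follows.

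The main obstacle, I expect, is recognizing that $\tau(G)$ is the correct invariant here. The degree-based bound of Theorem \ref{mcondition} (applied with parameter $m-1$, justified because $K_{1,m}$-freeness gives $K_{1,(m-1)+1}$-freeness) does not reduce cleanly to an inequality solely in terms of $\dim(\ind(G))$, whereas $\tau(G)$ precisely measures how efficiently independent sets can be dominated, and $K_{1, m}$-freeness translates directly into the constraint that each vertex dominates at most $m-1$ elements of any independent set. Once this link is seen, the counting step is essentially immediate.
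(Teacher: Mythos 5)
Your proposal is correct and follows essentially the same route as the paper: both reduce to Corollary \ref{tauhomology} via the bound $\tau(G) \geq \gamma_0(A,G) \geq |A|/(m-1)$ for a maximum independent set $A$, using the observation that a vertex with $m$ independent neighbors in $A$ would yield an induced $K_{1,m}$. Your handling of the ceiling arithmetic is slightly more explicit than the paper's, but the substance is identical.
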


\begin{proof}
Let $A$ be an independent set of vertices of $G$ with maximal cardinality (so that $|A| = \dim(\ind(G)) + 1$), and let $X$ be a set dominating $A$ with $|X| = \gamma_0(A, G) \leq \tau(G)$.  If $x \in X$, the number of elements in $A$ that are neighbors of $x$ cannot exceed $m-1$.  Indeed, if $x$ had $m$ neighbors in $A$, then these vertices together with $x$ would form an induced $K_{1, m}$ (since $A$ is independent).  Thus, we have $|A| \leq |X| (m-1) \Rightarrow \tau(G) \geq \frac{|A|}{m-1}$, meaning $\tau(G) - 1 \geq \left\lceil \frac{\dim(\ind(G))  - m +2}{m-1}\right \rceil$.  The result now follows from Corollary \ref{tauhomology}.
\end{proof}

We also note that the homological version of Theorem \ref{secondbarmak} follows easily from our results.

\begin{corollary}
Let $G$ be a connected graph and let $A\subseteq V(G)$ such that the distance between any two members of $A$ is at least $3$.  Then $\tilde{H}_k(\ind(G)) = 0 $ for $k \leq |A| - 2$.  
\end{corollary}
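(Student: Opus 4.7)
The plan is to reduce the statement to Corollary~\ref{tauhomology} by showing that the hypothesis forces $\tau(G) \geq |A|$. Once this inequality is established, any $k \leq |A| - 2$ satisfies $k < \tau(G) - 1$, and Corollary~\ref{tauhomology} immediately gives $\tilde{H}_k(\ind(G)) = 0$.

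First, I would observe that the distance hypothesis makes $A$ an independent set: any two vertices of $A$ are at distance at least $3$, hence in particular not adjacent. So $A$ is a candidate for computing $\tau(G)$, and by definition $\tau(G) \geq \gamma_0(A, G)$.

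The crux is therefore to prove $\gamma_0(A, G) \geq |A|$. Let $X \subseteq V(G)$ be a set realizing $\gamma_0(A, G)$, so $A \subseteq N(X)$. For each $a \in A$, pick some $x_a \in X$ with $x_a \in N(a)$ (this is possible by the definition of $A \subseteq N(X)$). I claim the map $a \mapsto x_a$ is injective: if $x_a = x_{a'}$ for distinct $a, a' \in A$, then $a$ and $a'$ would share a common neighbor $x_a$, making their distance at most $2$, contradicting the hypothesis. Thus $|X| \geq |A|$, giving $\gamma_0(A, G) \geq |A|$, and consequently $\tau(G) \geq |A|$.

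I do not anticipate any serious obstacle; the argument is essentially a two-line counting observation combined with Corollary~\ref{tauhomology}. The only subtle point to keep in mind is that in the definition of $\gamma_0(A, G)$, elements of $X$ need not lie outside $A$, but since $G$ has no loops, each $a \in A$ still requires a genuine neighbor in $X$, and that neighbor cannot be shared with any other member of $A$ under the distance-$3$ hypothesis.
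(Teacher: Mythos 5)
Your proposal is correct and follows essentially the same route as the paper's proof: both show that no single vertex can dominate two members of $A$ (else their distance would be at most $2$), so any set realizing $\gamma_0(A,G)$ has at least $|A|$ elements, whence $\tau(G)\geq |A|$ and Corollary \ref{tauhomology} (equivalently, Theorem \ref{pdind} with Corollary \ref{pdhomology}) finishes the argument. Your explicit injectivity phrasing and the remark that members of $X$ may lie in $A$ are minor elaborations of the same idea.
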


\begin{proof}
The set $A$ is independent since none of its members are distance $1$ from each other.  Now let $X$ be a set of vertices realizing $\gamma_0(A, G)$.  If $x \in X$ is adjacent to two vertices in $A$, then these two vertices would be distance $2$ from one another.  Thus, $\tau(G) \geq |X| = |A|$, and so Theorem \ref{pdind} completes the proof.    
\end{proof}

We close this section with the homological corollaries of Theorems \ref{mcondition} and \ref{zell}, each of which follows immediately from the application of \ref{pdhomology}.  These results improve and generalize the bounds on the homology of claw-free graphs (\cite{engstromclawfree}) and subgraphs of the lattice $\mathbb{Z}^2$ (\cite{engstromtree}). 

\begin{corollary}
Let $G$ be a graph containing no induced $K_{1, m+1}$, and let $(x, y)$ be an $\frac{m-1}{m}$-max edge with $\deg(x) = d \geq e = \deg(y)$.  Then $\tilde{H}_k(\ind(G)) = 0$ for 
\[
k < \frac{|V(G)|}{d+ \frac{m-1}{m}e + 1} - 1.
\]
\end{corollary}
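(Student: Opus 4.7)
The plan is to combine the two results that the corollary text explicitly flags: the projective-dimension bound of Theorem \ref{mcondition} and the homology-vanishing translation of Corollary \ref{pdhomology}. Since $G$ has no induced $K_{1,m+1}$ and $(x,y)$ is an $\left(\frac{m-1}{m}\right)$-max edge with $\deg(x) = d \geq \deg(y) = e$, Theorem \ref{mcondition} directly applies and gives
\[
\pd(G) \leq n \left( 1 - \frac{1}{d + \frac{m-1}{m} e + 1} \right),
\]
where $n = |V(G)|$.

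Next I would feed this into Corollary \ref{pdhomology}, which asserts that $\tilde H_k(\ind(G)) = 0$ whenever $k < n - \pd(G) - 1$. Substituting the upper bound on $\pd(G)$ gives
\[
n - \pd(G) - 1 \geq n - n\left(1 - \frac{1}{d + \frac{m-1}{m} e + 1}\right) - 1 = \frac{n}{d + \frac{m-1}{m} e + 1} - 1.
\]
Hence any $k$ with $k < \frac{n}{d + \frac{m-1}{m} e + 1} - 1$ automatically satisfies $k < n - \pd(G) - 1$, so Corollary \ref{pdhomology} forces $\tilde H_k(\ind(G)) = 0$, which is the claim.

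There is no real obstacle here: the only things to verify are that the hypotheses of Theorem \ref{mcondition} are met verbatim (they are, since the statement of the corollary repeats them) and that the routine inequality chain above is carried out correctly. Everything else is plug-and-play, so the proof reduces to a one-line citation of Theorem \ref{mcondition} followed by a one-line citation of Corollary \ref{pdhomology} and the short rearrangement displayed above.
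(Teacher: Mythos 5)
Your proposal is correct and matches the paper's own (implicit) argument exactly: the paper states that this corollary ``follows immediately from the application of Corollary \ref{pdhomology}'' to Theorem \ref{mcondition}, which is precisely the two-step citation and rearrangement you carry out. Nothing is missing.
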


\begin{corollary}
Let $G$ be a subgraph of the lattice $\mathbb{Z}^\ell$.  Then $\tilde{H}_k(\ind(G)) = 0$ for $k < \frac{|V(G)|}{2\ell +1} - 1$.
\end{corollary}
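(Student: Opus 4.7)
The plan is to derive this corollary as an immediate consequence of two results already in hand: the projective dimension bound for lattice subgraphs (Theorem \ref{zell}) and the homological vanishing principle from Hochster's formula (Corollary \ref{pdhomology}). There is essentially no new content to prove; the work has all been done upstream.

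First, I would write $n = |V(G)|$ and invoke Theorem \ref{zell} to record the inequality
\[
\pd(G) \leq n\left(1 - \frac{1}{2\ell+1}\right) = n - \frac{n}{2\ell+1}.
\]
Rearranging, this gives $n - \pd(G) - 1 \geq \frac{n}{2\ell+1} - 1$. Thus any integer $k$ satisfying $k < \frac{n}{2\ell+1} - 1$ automatically satisfies $k < n - \pd(G) - 1$.

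Second, I would apply Corollary \ref{pdhomology} to $G$: it guarantees $\tilde{H}_k(\ind(G)) = 0$ for every $k < n - \pd(G) - 1$. Combining this with the inequality of the previous step gives the claimed vanishing for all $k < \frac{|V(G)|}{2\ell+1} - 1$, completing the proof.

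There is no genuine obstacle here; the only thing worth flagging is that the corollary is simply the homological shadow of Theorem \ref{zell} obtained by passing through Hochster's formula, mirroring the structural pattern of the other corollaries in this section (such as the ones derived from Theorem \ref{edgedom} and Theorem \ref{mcondition}). The author's parenthetical comment that it ``follows immediately from the application of \ref{pdhomology}'' confirms that this is precisely the intended one-line argument.
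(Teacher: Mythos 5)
Your proposal is correct and is exactly the paper's intended argument: the paper states that this corollary ``follows immediately from the application of \ref{pdhomology}'' to Theorem \ref{zell}, which is precisely the two-step combination you carry out. The arithmetic rearrangement $n - \pd(G) - 1 \geq \frac{n}{2\ell+1} - 1$ is the only content, and you have it right.
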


\section{Further Remarks}\label{lastsec}

In this section we discuss some potential research directions from our work and give some relevant examples. The most natural next step is  to give bounds on projective dimension of monomial ideals in general. We expect some of our results to extend smoothly with the right notions of various domination parameters for clutters. However, significant progress in this direction may require deeper insight; see the concluding discussion of \cite{craigandus}. 

Another interesting question is whether the process used in Theorem \ref{ME} can be used to compute the projective dimension. 

\begin{question}\label{abz} 
Let $\mathcal{C}$ be a class of graphs whose projective dimensions are independent of the chosen ground field, and let $F(G)$ be the maximal of functions satisfying the conditions of Theorem \ref{ME} for graphs belonging to $\mathcal{C}$.  Is it then the case that $\pd(G) = |V(G)| - F(G)$ for any graph $G\in \mathcal{C}$?
\end{question}

Note that a  similar conjecture on connectivity of independence complexes was  raised by Aharoni-Berger-Ziv in \cite{abzconj} and disproved by Barmak in \cite{abzdisproof}. However, since projective dimension is clearly computable, it is not clear to us that one can answer our question negatively in the same manner.

Given our two main theorems bounding projective dimension (Theorems \ref{pdind} and \ref{edgedom}), it makes sense to juxtapose the two statistics $\epsilon(G)$ and $\tau(G)$.  

\begin{example}\label{eptau}
{\rm We note that neither of Theorems \ref{pdind} nor \ref{edgedom} implies the other.  To see this, we construct two infinite families of connected graphs: For a graph $G$ in the first family, $\epsilon(G)$ is roughly twice $\tau(G)$, whereas the opposite is true for graphs in the second family.  It should be noted that it is not hard to come up with disconnected examples of this phenomenon: Indeed, note that $\epsilon(C_5) = 2$ and $\tau(C_5) = 1$, so that if we let $G$ be the disjoint union of $k$ copies of $C_5$ we have $\epsilon(C_5) = 2k$ and $\tau(G) = k$.  Similarly, note that $\epsilon(P_4) = 1$ and $\tau(P_4) = 2$, so that if we let $G$ be the disjoint union of $k$ copies of $P_4$ we have $\epsilon(G) = k$ and $\tau(G) = 2k$.   

Let $Q_n$ denote the graph consisting of $n$ copies of $C_5$, connected in series with an edge between each copy (see Figure \ref{pent}).  Then clearly $\epsilon(Q_n) = 2n-1$ (choose each of the edges connecting the pentagons, plus the bottom edge from each pentagon).  Furthermore, any independent set in $Q_n$ contains at most $2$ vertices from each pentagon, and these two vertices can be dominated by one vertex, giving $\tau(Q_n) = n$.  

\begin{figure}[htp]
\centering
\includegraphics[height = .75in]{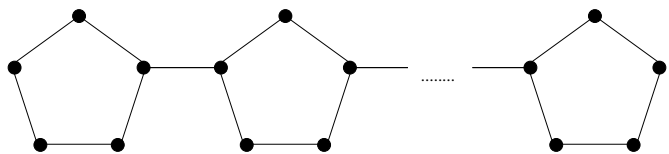}
\caption{The graph $Q_n$.}\label{pent}
\end{figure}

On the other hand, let $T_n$ denote the path on $4n$ vertices $v_1, v_2, \ldots, v_{4n}$, with additional vertices $w_1, w_2, \ldots, w_n$ where, for each $i$, $w_i$ is connected to $v_{4(i-1)+ 1}$ (see Figure \ref{t2}).  Then $\epsilon(T_n) = n$ (simply take all edges of the form $(v_{4k + 1}, v_{4k+2})$ for $0 \leq k \leq n-1$), yet $\tau(T_n) \geq 2n$, which can be seen by considering the independent set $\{v_4, v_8, v_{16}, \ldots, v_{4n}\} \cup \{w_1, w_2, \ldots, w_n\}$.  }

\begin{figure}[htp]
\centering
\includegraphics[height = .55in]{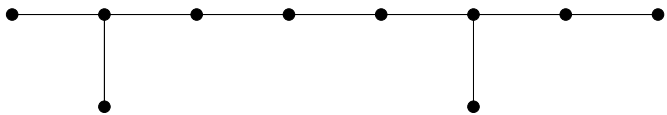}
\caption{The graph $T_2$.}\label{t2}
\end{figure}

\end{example}

\begin{example}\label{tab}
{\rm Let $P_n$ denote the path on $n$ vertices, and let $C_n$ denote the $n$-vertex cycle.  Since both $P_n$ and $C_n$ are long graphs, Theorem \ref{long} gives us that $\gamma(G) = i(G)$ for $G$ a cycle or path.  In the table below we give the parameters for cycles and paths.}  

\begin{center}
\begin{tabular}{|c|c|c|c|c|}
\hline
\text{Graph} $G$ & $i(G) = \gamma(G)$ & $\epsilon(G)$ & $\tau(G)$  & $\pd(G)$  \\
\hline
$P_n$ & $\lceil \frac{n}{3} \rceil$ &  $\lceil \frac{n}{4} \rceil$ & $\lceil \frac{n}{3} \rceil$ &        $\lfloor \frac{2n}{3} \rfloor $\\
\hline
$C_n$ & $\lceil \frac{n}{3} \rceil$  & $\lceil \frac{n}{4} \rceil$ & $\lfloor \frac{n}{3}  \rfloor$          &$\lceil \frac{2n-1}{3} \rceil$\\
\hline
\end{tabular}
\end{center}
\end{example}

\begin{remark}
{\rm Given Theorem \ref{largegamma}, it makes sense to ask when the bound $\pd(G) \leq n -\gamma(G)$ holds for a graph $G$ with $n$ vertices.  Note that when $G$ is long, $\pd(G) \neq n - i(G)$ implies $\pd(G) \nleq n - \gamma(G)$ (since $i(G) = \gamma(G) $ and $\pd(G) \geq n - i(G)$).  Thus, it is not hard to produce graphs for which the bound $\pd(G) \leq n - \gamma(G)$ fails.  As $\gamma(G) \geq \tau(G)$ (Proposition \ref{gammaind}), it is tempting to ask if there is a combinatorial graph invariant (call it $\kappa$) such that $\tau(G) \leq \kappa(G) \leq \gamma(G)$ for all $G$, $\kappa(G) = i(G)$ when $G$ is chordal, and $\pd(G) \leq n - \kappa(G)$ for all $n$-vertex graphs $G$.}
\end{remark}  

\noindent \textbf{Acknowledgement}:  We are grateful to Craig Huneke and Jeremy Martin for many interesting conversations.  The first author was partially supported by NSF grant DMS 0834050.

\end{document}